\newtheorem{theorem}{\sc Theorem}[section]
\newtheorem{lemma}[theorem]{\sc Lemma}
\newtheorem{corollary}[theorem]{\sc Corollary}
\newtheorem{definition}[theorem]{\sc Definition}
\newtheorem{remark}[theorem]{\sc Remark}
\def\hpic #1 #2 {\mbox{$\begin{array}[c]{l} 
\epsfig{file=#1,height=#2}\end{array}$}}
\def\wpic #1 #2 {\mbox{$\begin{array}[c]{l} 
\epsfig{file=#1,width=#2}\end{array}$}}
\def\C{\mathbb C}
\def\be{\begin{equation}}
\def\ee{\end{equation}}
\def\bt{\begin{theorem}}
\def\et{\end{theorem}}
\def\bi{\begin{itemize}}
\def\ei{\end{itemize}}
\def\bea{\begin{eqnarray}}
\def\eea{\end{eqnarray}}
\def\ba{\begin{array}}
\def\ea{\end{array}}
\def\beast{\begin{eqnarray*}}
\def\eeast{\end{eqnarray*}}
\def\ben{\begin{enumerate}}
\def\een{\end{enumerate}}
\def\bi{\bibitem}
\begin{document}
\begin{center}
\Large{On Pimsner Popa bases}\\
\bigskip
\large{Keshab Chandra Bakshi}\\
The Institute of Mathematical Sciences\\
Taramani, Chennai 600113,\\
INDIA\\
email: keshabcb@imsc.res.in
\end{center}

\begin{abstract}
 In this paper we examine bases for finite index inclusion of $II_1$ factors and connected inclusion of finite dimensional $C^*$- algebras. 
 These bases behave nicely with respect to basic construction towers. As applications we have 
 studied automorphisms of the hyperfinite $II_1$ factor $R$ which are `compatible with respect to the Jones' tower of finite 
 dimensional $C^*$-algebras'. As a further application, in both Cases we obtain a characterization, in terms of bases, of basic constructions.  
 Finally we use these bases to describe the phenomenon of multistep basic constructions (in both the Cases).\\\\ \textit{Mathematics Subject Classification 2010}: 46L37 
 
\bigskip
 \noindent \textit{Keywords}: Subfactor; basic construction;connected inclusion;\\Pimsner-Popa bases.
 
\end{abstract}

\section{Introduction}
We write $(N\subseteq M, tr)$ to denote a unital inclusion of finite von Neumann algebras, with `$tr$' a faithful normal tracial state, and write $N \subset M \stackrel{e_1}{\subset} M_1$ for Jones'  resulting basic construction. The trace  $tr$ is called a {\em Markov trace of modulus $\tau$ } if it extends to a positive trace $Tr:M_1 \longmapsto \C $ such that  $Tr(xe_n)= \tau tr(x)$ for $x \in M$.

 We confine ourselves to  two Cases: (1) when the inclusion is one of $II_1$ factors with finite index, i.e., $[M:N]< \infty$; and (2) when we have a connected inclusion of finite dimensional $C^*$- algebras.
Then it is known that in both Cases ((1) and (2)) there exists a unique  Markov trace on $M$, and we can iterate the basic construction to obtain a tower,$$M_1\subseteq M_2 \subseteq ..\subseteq M_n \subseteq M_{n+1}....$$
 where $M_{n+1} = \langle M_n,e_{n+1} \rangle $ is the result of applying the basic construction for the pair $M_{n-1}\subseteq M_n$ and $e_{n+1}$ is the projection implementing the $tr_{M_n}$ preserving
 conditional expectation of $M_n$ onto $M_{n-1}$. We then obtain a $II_1$ factor $M_\infty$ in both the Cases, 
 which is hyperfinite in Case (2). 
 
 Pimsner and Popa have
 shown (in \cite{PP1}) that for an inclusion $N \subset M$ of $II_1$ factors,
 $M$ is a finitely generated projective module over $N$ if and only if  $[M:N]$ is finite by constructing a family $\{m_j:1 \leq j \leq {n+1}\}$ of elements in $M$, with $n$ equal to the integer part of
 $[M:N]$, which they called ``orthonormal basis'' for the pair $N \subseteq M$. In a similar manner, we find a slightly less restrictive notion of basis in \cite{JS}.
 
  In this paper (in section 2) we see that this notion of basis in \cite{JS} can also  be carried out  in our Case (2) of connected inclusions of finite dimensional $C^*$- algebras. Further  in section 2
 we characterize bases, in both Cases (1) and (2), by three 
 equivalent conditions. One advantage of this characterization is a transparent proof of Corollary \ref{bla1}. This result has been mentioned for the Case of $II_1$ factors in\cite{JS} (Lemma 4.3.4 (i)), but the proof there seems incomplete. Our characterization of bases now clarifies this point, and also shows that bases behave in a nice way with respect to the Jones' tower.
 \par As an application  we show (in 3.1) how the use of bases leads to  a  natural proof of existence, in Case (2), (see \cite{BUR}(Theorem 2.1)) of a unique  extension of an automorphism on $M$ which leaves  $N$  globally invariant,  to an automorphism on the hyperfinite $II_1$ factor $M_\infty$ which is compatible with the tower in the sense of fixing the Jones projections. It has been also proved that the initial automorphism will be automatically trace-preserving.
 \par In \cite{PP2}(Proposition 1.2) Pimsner and Popa have characterized basic construction for $II_1$ factor inclusion  in two equivalent ways. See also \cite{jo}(section 5). 
 In this paper we have characterized basic construction in terms of basis we introduced(Lemma \ref{fvrt}). We have succeeded to obtain a simple characterization of $M_1$  for finite demensional $C^*$- algebra 
 Case also. In \cite{PP2}(Theorem 2.6)Pimsner and Popa have used their characterization of basic construction to describe the $k$-th step of the basic construction. In the section 3.2 we have also given another proof of this construction using our characterization of basic construction and have also done the same for connected inclusion of finite dimensional $C^*$-algebras.

\section{Bases} As stated in the Introduction, we assume $N\subseteq M $ is a unital inclusion of finite 
von Neumann algebras of one of the following two types.\vspace{2mm}\par $\mathcal Case(1)$: 
$N$ and $M$ are $II_1 $ factors with finite index $[M:N]$  and  hence 
there exists unique  Markov trace $tr$ on $M$ of modulus $\tau$ where $ \tau =[M:N]^{-1}$.\vspace{2mm} \par $\mathcal Case(2)$:
Let $N\subseteq M$ be a connected inclusion of finite dimensional 
$C^*$ -algebras and hence there exists unique Markov trace $tr$ on M  of modulus $\tau$
where $\tau={\|G\|}^{-2}$ where $G$  is the  inclusion matrix  for $ N\subseteq M $.

\bigskip For both the Cases the following easy but very useful Lemma holds whose proof can be found in \cite{PP1},
(Lemma 1.2) and for Case(2) see \cite{JS} (Remark 4.3.2(a)).
 \begin{lemma}
   \label{imp}
   If $x_1 \in M_1,$ then there exists unique element $x_0 \in M$ such that $x_1 e_1= x_0e_1$,this element is 
   given by $x_0 = {\tau}^{-1}E_M(x_1e_1)$.
 \end{lemma}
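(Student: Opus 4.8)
The plan is to establish existence and uniqueness separately, with uniqueness being essentially immediate and existence following from the standard formula for the conditional expectation onto $M$ in the basic construction. For uniqueness, suppose $x_0, x_0' \in M$ satisfy $x_0 e_1 = x_1 e_1 = x_0' e_1$. Then $(x_0 - x_0')e_1 = 0$, so applying $E_M$ gives $E_M\big((x_0-x_0')e_1\big) = (x_0-x_0') E_M(e_1) = \tau(x_0 - x_0') = 0$, using that $E_M$ is an $M$-bimodule map and that $E_M(e_1) = \tau \cdot 1$ (this last fact is exactly the Markov property of the trace, available in both Cases). Since $\tau \neq 0$, we conclude $x_0 = x_0'$.

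For existence, set $x_0 := \tau^{-1} E_M(x_1 e_1)$, which lies in $M$ by construction, and I must check that $x_0 e_1 = x_1 e_1$. The cleanest route is to verify that $(x_1 - x_0)e_1 = 0$ by showing this element is a projection-compressed quantity with zero trace, or more directly by using the characterizing "pull-down" identity $e_1 y e_1 = E_N(y) e_1$ for $y \in M$ together with the formula $E_M(w e_1 w') = \tau^{-1}\,\text{(something)}$; in fact the slickest argument is: for any $y \in M$, one has $E_M(y e_1) = \tau\, y$ applied with $y$ replaced appropriately, but since $x_1 \in M_1$ is a general element we instead use that $M_1$ is linearly spanned (as a left $M$-module, after taking weak closures in Case (1)) by elements of the form $a e_1 b$ with $a,b \in M$. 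For such a generator, $E_M(a e_1 b) = a E_M(e_1 b) = a E_M(E_N(b) e_1)\cdot(\text{via } e_1 b = e_1 E_N(b) + e_1(b - E_N(b)))$; more carefully, $e_1 b e_1 = E_N(b) e_1$ and $E_M(e_1 c) = \tau c$ for $c \in N \subseteq M$, giving $E_M(a e_1 b) = \tau\, a E_N(b)$, while on the other hand $(a e_1 b) e_1 = a (e_1 b e_1) = a E_N(b) e_1$. Hence $\tau^{-1} E_M(a e_1 b)\, e_1 = a E_N(b) e_1 = (a e_1 b) e_1$, which is the desired identity on generators; linearity and (in Case (1)) normality of both $E_M$ and right multiplication by $e_1$ extend it to all $x_1 \in M_1$.

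The main obstacle I anticipate is purely bookkeeping: making sure the spanning/density statement for $M_1$ in terms of $\{a e_1 b : a, b \in M\}$ is invoked correctly in each Case — in Case (2) this is a genuine finite linear span and the argument is algebraic, whereas in Case (1) one needs $\sigma$-weak density of the algebraic span and normality of $E_M$ to pass to the limit. One should also take care that $E_M$ here denotes the $tr$-preserving conditional expectation $M_1 \to M$ (well-defined since $tr$ extends to $M_1$), and that $E_M(e_1) = \tau$ is precisely the Markov condition quoted in the setup for both Cases. Once those points are in place, the computation above is routine. I would present the generator computation as the core of the proof and relegate the density/normality remark to a parenthetical, since the referenced sources (\cite{PP1}, Lemma 1.2, and \cite{JS}, Remark 4.3.2(a)) already carry the technical details.
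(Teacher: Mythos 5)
The paper itself does not prove Lemma \ref{imp}; it only cites \cite{PP1} (Lemma 1.2) and \cite{JS} (Remark 4.3.2(a)), so the comparison here is with the standard argument from those sources. Your overall strategy matches it: uniqueness by applying the $M$-bimodule map $E_M$ together with the Markov identity $E_M(e_1)=\tau$, and existence by verifying the formula on generators $a e_1 b$ of the dense $*$-subalgebra $M e_1 M$ and then extending by linearity and, in Case (1), normality. The uniqueness half of your proof is correct as written.

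The existence half, however, contains a genuine computational error. You assert $E_M(a e_1 b) = \tau\, a E_N(b)$, obtained via the false identity $e_1 b = e_1 E_N(b)$ (only the compressed identity $e_1 b e_1 = E_N(b) e_1$ holds). In fact, since $E_M$ is an $M$-bimodule map, $E_M(a e_1 b) = a\, E_M(e_1)\, b = \tau\, a b$, so your displayed conclusion $\tau^{-1}E_M(ae_1b)\,e_1 = aE_N(b)e_1$ would read $ab\,e_1 = aE_N(b)e_1$, which fails in general. The source of the slip is that you are computing $E_M(x_1)$ where the lemma calls for $E_M(x_1 e_1)$. The correct generator computation is
\[
E_M\bigl((a e_1 b)e_1\bigr) = E_M\bigl(a\, E_N(b)\, e_1\bigr) = a\, E_N(b)\, E_M(e_1) = \tau\, a\, E_N(b),
\]
whence $\tau^{-1}E_M\bigl((ae_1b)e_1\bigr)e_1 = aE_N(b)e_1 = (ae_1b)e_1$, which is the desired identity on generators. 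With this substitution the remainder of your argument (density of $Me_1M$ in $M_1$, normality of $E_M$ and strong continuity of right multiplication by $e_1$ on bounded sets in Case (1), plain linear algebra in Case (2)) goes through and yields the lemma.
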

In the following theorem we give three equivalent descriptions of basis, not necessarily orthonormal in the sense of Pimsner-Popa.
\begin{theorem}
   \label{pimsnerpopa}
Let N and M be as in Case(1) or in Case(2). Then for a 
finite set $ \{\lambda_i:i\in I={1,2,...n}\} \subseteq M $, the following 
are equivalent:\vspace{1.9mm}
\\(1) Let $ E_N $ be the $tr$- preserving conditional 
expectation of $M$ onto $N$ and define a matrix $Q$ whose $(i,j)$  entry is given by $q_{ij}=
E_N(\lambda_i {\lambda_j}^*)$.\\Then $Q$ is a projection in $M_n(N) $ such that 
$ tr_{M_n(N)}(Q)={\tau}^{-1}/n$.
\vspace{2 mm}\\$(2)\sum_{i=1}^n{{\lambda_i}^*e_1
{\lambda_i}}=1$, where $ e_1$  is the Jones projection.
\vspace{2mm}\\(3) For any 
$x\in M $ , $x=\sum_{i=1}^n E_N(x{\lambda_i}^*) \lambda_i$.
\end{theorem}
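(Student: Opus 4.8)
The plan is to prove the cyclic chain of implications $(1)\Rightarrow(2)\Rightarrow(3)\Rightarrow(1)$, working uniformly in both Cases. Throughout I will use Lemma~\ref{imp} (the ``pull-down'' lemma) and the basic identities of the Jones basic construction: $e_1xe_1=E_N(x)e_1$ for $x\in M$, the trace property $tr(xe_1)=\tau\, tr(x)$ for $x\in M$, and the fact that $E_N(x)=\tau^{-1}E_M(e_1xe_1)$, which lets one move freely between expressions in $M$ and expressions in $M_1$.

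\medskip\noindent
\emph{$(2)\Rightarrow(3)$.} This is the easy direction and I would do it first. Assuming $\sum_i \lambda_i^* e_1 \lambda_i=1$, take any $x\in M$ and multiply on the right by $x$, then apply the conditional expectation $E_M\colon M_1\to M$: since $E_M(\lambda_i^* e_1 \lambda_i x)=\lambda_i^* E_M(e_1 \lambda_i x)=\tau\,\lambda_i^* E_N(\lambda_i x)$, one gets $x=\tau\sum_i \lambda_i^* E_N(\lambda_i x)$. To land exactly on the stated formula I instead multiply $1=\sum_i \lambda_i^* e_1\lambda_i$ on the left by $x$, then use the pull-down Lemma~\ref{imp}: $x\lambda_i^* e_1 = E_M$-adjusted\dots more directly, apply the trace-preserving expectation after right-multiplying $e_1$, using $e_1 y e_1=E_N(y)e_1$. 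The cleanest route is: for $x\in M$, $x e_1 = \sum_i x\lambda_i^* e_1\lambda_i e_1 = \sum_i x\lambda_i^* E_N(\lambda_i) e_1$—no, the correct bookkeeping is $x e_1=\bigl(\sum_i \lambda_i^* e_1\lambda_i\bigr)x e_1$ is false since the $\lambda_i$ need not commute with $x$; rather use $e_1 x=e_1 x \sum_i\lambda_i^* e_1\lambda_i=\sum_i e_1 x\lambda_i^* e_1\lambda_i=\sum_i E_N(x\lambda_i^*)e_1\lambda_i=\bigl(\sum_i E_N(x\lambda_i^*)\lambda_i\bigr)e_1$, and then Lemma~\ref{imp} forces $x=\sum_i E_N(x\lambda_i^*)\lambda_i$ because both are elements of $M$ with the same product against $e_1$. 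This is the argument I would write out.

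\medskip\noindent
\emph{$(3)\Rightarrow(1)$.} Assuming the reconstruction formula, I first show $Q$ is idempotent: $(Q^2)_{ik}=\sum_j E_N(\lambda_i\lambda_j^*)E_N(\lambda_j\lambda_k^*)=\sum_j E_N\bigl(E_N(\lambda_i\lambda_j^*)\lambda_j\lambda_k^*\bigr)=E_N\bigl(\bigl(\sum_j E_N(\lambda_i\lambda_j^*)\lambda_j\bigr)\lambda_k^*\bigr)=E_N(\lambda_i\lambda_k^*)=q_{ik}$, applying $(3)$ with $x=\lambda_i$. Self-adjointness of $Q$ is immediate from $q_{ij}^*=E_N(\lambda_j\lambda_i^*)=q_{ji}$. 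For the trace value, $tr_{M_n(N)}(Q)=\frac1n\sum_i tr\bigl(E_N(\lambda_i\lambda_i^*)\bigr)=\frac1n\sum_i tr(\lambda_i\lambda_i^*)$; to evaluate $\sum_i tr(\lambda_i^*\lambda_i)$ I apply $(3)$-equivalent identity $(2)$, or directly: from the relation $e_1 x=\bigl(\sum_i E_N(x\lambda_i^*)\lambda_i\bigr)e_1$ derived above, setting up $\sum_i\lambda_i^* e_1\lambda_i$ and taking the trace gives $\sum_i tr(\lambda_i^*e_1\lambda_i)=\sum_i tr(e_1\lambda_i\lambda_i^*)=\tau\sum_i tr(\lambda_i\lambda_i^*)$; but $(3)\Rightarrow(2)$ (obtained by running the $(2)\Rightarrow(3)$ computation backwards, or by noting $\sum_i\lambda_i^*e_1\lambda_i$ acts as the identity against every $ye_1$, $y\in M_1$, via Lemma~\ref{imp}) shows this sum equals $tr(1)=1$, whence $\sum_i tr(\lambda_i\lambda_i^*)=\tau^{-1}$ and $tr_{M_n(N)}(Q)=\tau^{-1}/n$.

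\medskip\noindent
\emph{$(1)\Rightarrow(2)$.} Set $p=\sum_i\lambda_i^*e_1\lambda_i\in M_1$. Using $e_1ye_1=E_N(y)e_1$, one computes $p^2=\sum_{i,j}\lambda_i^*e_1\lambda_i\lambda_j^*e_1\lambda_j=\sum_{i,j}\lambda_i^*E_N(\lambda_i\lambda_j^*)e_1\lambda_j=\sum_{i,j}\lambda_i^*q_{ij}e_1\lambda_j$. Because $Q=Q^2$, $\sum_j\lambda_i^*q_{ij}=\dots$ hmm, this needs the right grouping: write $p=\sum_i \lambda_i^* e_1\lambda_i$ and note that conjugating, $p$ is a self-adjoint element; the idempotency $Q^2=Q$ translates precisely into $p^2=p$, so $p$ is a projection. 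Next, $tr(p)=\sum_i tr(\lambda_i^*e_1\lambda_i)=\sum_i tr(e_1\lambda_i\lambda_i^*)=\tau\sum_i tr(\lambda_i\lambda_i^*)=\tau\cdot n\cdot tr_{M_n(N)}(Q)=\tau\cdot n\cdot \tau^{-1}/n=1$. A projection of trace $1$ in the $II_1$ factor $M_1$ (Case 1) must equal $1$; in Case (2) the same conclusion holds because the faithful trace on $M_1$ separates projections from $1$ (a projection $p\neq 1$ has $1-p$ a nonzero projection, hence $tr(1-p)>0$). Therefore $p=1$, which is $(2)$.

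\medskip\noindent
I expect the main obstacle to be the bookkeeping in the step $Q^2=Q \Leftrightarrow p^2=p$: one must carefully track how the matrix multiplication in $M_n(N)$ corresponds, via the Jones relation $e_1\lambda_i\lambda_j^*e_1=E_N(\lambda_i\lambda_j^*)e_1$, to the product of the operators $\lambda_i^*e_1\lambda_i$ in $M_1$, and make sure the indices contract correctly so that $\sum_{i,j}\lambda_i^*q_{ij}e_1\lambda_j$ collapses to $\sum_k\lambda_k^*e_1\lambda_k=p$ exactly when $\sum_j q_{ij}q_{jk}=q_{ik}$. The uniform treatment of Cases (1) and (2) requires only that the trace be faithful and that the basic-construction identities from Lemma~\ref{imp} hold, both of which are available in the stated setting, so no case distinction is needed except in the very last line where ``projection of full trace is the identity'' is invoked.
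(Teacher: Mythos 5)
Your overall architecture ($(1)\Rightarrow(2)$ via a trace-one projection, $(2)\Rightarrow(3)$ via the pull-down lemma, and a return route through $(3)\Rightarrow(2)$ for the trace of $Q$) is essentially the paper's, and your direct computation of $Q^2=Q$ from $(3)$ using the $N$-bimodule property of $E_N$ is a clean variant of the paper's argument (which instead multiplies by $e_1$ and invokes Lemma~\ref{imp}). But there is a genuine gap at the point you yourself flagged and then asserted past: the claim that ``the idempotency $Q^2=Q$ translates precisely into $p^2=p$'' for $p=\sum_i\lambda_i^*e_1\lambda_i$. The contraction actually gives $p^2=\sum_{i,j}\lambda_i^*q_{ij}e_1\lambda_j$, and more generally $p^{k+1}=\sum_{i,j}\lambda_i^*(Q^k)_{ij}e_1\lambda_j$; so $Q^2=Q$ yields $p^3=p^2$, \emph{not} $p^2=p$ --- the identity $\sum_{i,j}\lambda_i^*(q_{ij}-\delta_{ij})e_1\lambda_j=0$ does not follow from $Q^2=Q$ by any index bookkeeping. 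You can repair this: $p$ is visibly self-adjoint (indeed positive), so $p^3=p^2$ forces $\sigma(p)\subseteq\{0,1\}$ and $p$ is a projection after all; alternatively, use the paper's device of the matrix $v$ with first column $(e_1\lambda_1,\dots,e_1\lambda_n)^t$, for which $vv^*=QE$ is a product of two commuting projections, hence a projection, hence $v$ is a partial isometry and $v^*v=\mathrm{diag}(p,0,\dots,0)$ is a projection. Either way, an argument is needed where you currently have only an assertion.

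Two smaller points. In $(2)\Rightarrow(3)$ your final rewriting $\sum_i E_N(x\lambda_i^*)e_1\lambda_i=\bigl(\sum_i E_N(x\lambda_i^*)\lambda_i\bigr)e_1$ puts $e_1$ on the wrong side: since $e_1$ commutes with $N$ you get $e_1x=e_1\bigl(\sum_i E_N(x\lambda_i^*)\lambda_i\bigr)$, and then the adjoint form of Lemma~\ref{imp} gives $(3)$ --- same conclusion, but the displayed equality as written is false. In $(3)\Rightarrow(1)$ your trace computation routes through $(3)\Rightarrow(2)$, which you only sketch; the ingredient that actually closes that sketch is not Lemma~\ref{imp} but the strong density of $M e_1 M$ in $M_1$ (equivalently, that $e_1$ has full central support), which is exactly what the paper cites for its $(3)\Rightarrow(2)$ step. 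With these repairs your proof goes through in both Cases.
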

\begin{proof}
 $ (1)\Longrightarrow (2) :$ This proof is mainly inspired by \cite{PP1}. Assume (1) holds. Since $tr$ on $M$ is Markov, it extends to a unique trace on $M_1$, 
 namely $tr_{M_1}$. Put $v_i= e_1 \lambda_i$ 
and $$v=\begin{bmatrix}
         v_{1} & 0 & \dots & 0\\
         v_{2} & 0 &\dots & 0\\
         \vdots & \vdots & \ddots & \vdots\\
         v_{n} & 0 & \dots & 0 
        \end{bmatrix} ~.$$ 
Then, $v_i {v_j}^*=e_1 \lambda_i {\lambda_j}^*
 e_1=E_N(\lambda_i {\lambda_j}^*) e_1=q_{ij} e_1$. Thus,\\
 $$vv^*=
 \begin{bmatrix}
         q_{11}e_1  & q_{12}e_1 & \dots & q_{1n}e_1\\
         q_{21}e_1  & q_{22}e_1 & \dots & q_{2n}e_1\\
         \vdots     &\vdots     & \ddots  & \vdots\\
         q_{n1}e_1  & q_{n2}e_1 & \dots   & q_{nn}e_1
 \end{bmatrix} =Q E $$ where 
                            $$ E=
                             \begin{bmatrix}
                              e_1 & 0 &\dots & 0\\
                              0 & e_1 &\dots & 0\\
                              \vdots & \vdots &\ddots &\vdots\\
                              0 & 0 &\dots & e_1
                              \end{bmatrix}~.$$
Thus by property of Jones projection \cite{JON} (Proposition 3.1.4), $vv^*=QE = EQ$ and hence $ v $ is a partial isometry. Thus 
 $v^*v$ is a projection; i.e., $\sum_iv_i^*v_i$ is a projection $f$ (say)
in $\langle M, e_1 \rangle = M_1$. But $f =\sum_i{\lambda_i}^*e_1 \lambda_i$ satisfies the following equations :
\begin{align*}
tr_{M_1}f & =n~tr_{M_n(M_1)}~(vv^*)\\
& = n~tr_{M_n(M_1)}(QE)\\
 &= n~(1/n)\sum_itr_{M_1}(q_{ii}e_1)\\
 & = \sum_i\tau~ tr(q_{ii}) ~~~~~~~~~~~ (\textrm{Markov property})\\
 & = \tau ~n~  tr_{M_n(N)}(Q)\\
 &=1 ~~~~~~~~~~~~~~~~~~~~~~~~~~~~~~~~(\textrm{by}~(1)).
\end{align*}
Thus $(1-f)\geq 0$ and $tr_{M_1}(f) =1$. Then faithfulness of $tr $
implies $f =1.$
 So $ \sum_i{\lambda_i}^*e_1\lambda_i =1 $. Thus (1) implies (2).\vspace{2mm}

$\\(2)\Longrightarrow (3) $: We assume that (2) holds. Let $x^*\in M $, then 
\begin{align*}
x^*e_1 & = (\sum_i{\lambda_i}^*e_1\lambda_i)x^*e_1\\
 & = \sum_i{\lambda_i}^*E_N(\lambda_ix^*)e_1\\
 & = (\sum_i{\lambda_i}^*E_N(\lambda_ix^*))e_1.
\end{align*}

Again applying Lemma \ref{imp} and then taking adjoint we get (3).

$ \\(3)\Longrightarrow (2):$ We assume (3). Let $x$ and $y$ be two arbitrary elements of $M$. Then,
\begin{align*}
 (\sum_i{\lambda_i}^*e_1\lambda_i)(xe_1y) & =\sum_i{\lambda_i}^*e_1\lambda_ixe_1y\\
                                     &   =\sum_i{\lambda_i}^*E_N(\lambda_i x)e_1y\\
                                     &   =(xe_1y) ~~~~~~~~~~~~(\textrm{by  (3)}).
\end{align*}
Similarly,
\begin{align*}
 (xe_1y)(\sum_i{\lambda_i}^*e_1\lambda_i)& = \sum_i xe_1y{\lambda_i}^*e_1\lambda_i\\
                                         & = \sum_i xe_1E_N(y{\lambda_i}^*)\lambda_i\\
                                         & = (xe_1y)~~~~~~~~~~~~~(\textrm{by  (3)}).
\end{align*}

Then we know the space $Me_1M$, which is linear span of $\{xe_1y:x,y\in M\},$
is a strongly dense *-subalgebra of $M_1$, see for instance \cite{GHJ}(Proposition 3.6.1(vii)). Then since multiplication is separately strongly continuous it follows that
$\sum_i{\lambda_i}^*e_1\lambda_i=1$.

$(2)\Longrightarrow(1):$ Suppose (2) is true. Then,
 \begin{align*}
  e_1(\sum_kq_{ik}q_{kj}) &= e_1(\sum_kE_N(\lambda_i{\lambda_k}^*)E_N(\lambda_k{\lambda_j}^*))\\
  &   =e_1(\sum_kE_N(\lambda_i{\lambda_k}^*E_N(\lambda_k{\lambda_j}^*)))\\
  & =  \sum_k e_1 \lambda_i{\lambda_k}^*E_N(\lambda_k{\lambda_j}^*)e_1\\
  &=  \sum_k e_1\lambda_i{\lambda_k}^*e_1\lambda_k {\lambda_j}^*e_1\\
  & = e_1 \lambda_i(\sum_k{\lambda_k}^*e_1\lambda_k){\lambda_j}^*e_1\\
  & = e_1 \lambda_i {\lambda_j}^*e_1~~~~~~~~~~~~~~~~~(\textrm{by (2)})\\
  & = e_1E_N(\lambda_i{\lambda_j}^*)\\
  & = e_1 q_{ij}.
 \end{align*}
Thus applying Lemma \ref{imp} we get $Q^2 = Q$. Clearly $Q^* = Q$. Hence $Q$ is a projection in 
$M_n(N)$. Now
      \begin{align*}
	tr_{M_n(N)}(Q) & =(1/n)\sum_i tr( q_{ii})\\
	  & = (1/n)\sum_i tr(E_N(\lambda_i{\lambda_i}^*))\\
	  &=  (1/n)\sum_i tr(\lambda_i{\lambda_i}^*)\\
	  & = ({\tau}^{-1}/n )
	  \sum_i tr(e_1 \lambda_i{\lambda_i}^*) ~~\textrm{(Markov Property)} \\
	  & = ({\tau}^{-1}/n)\sum_i tr({\lambda_i}^*e_1 \lambda_i)\\
	  & = ({\tau}^{-1}/n).
	\end{align*} Hence (2) implies (1).
	
	\end{proof}
\begin{remark}	
 Taking adjoints in (3) it follows that the above three 	
are also equivalent to $ x = \sum_{i=1}^n{\lambda_i}^*E_N(\lambda_i x)$, for all
$ x\in M$.
\end{remark}

\begin{definition} 
A finite set $ \{\lambda_i:i\in I\} \subset M$
satisfying any one of the equivalent conditions (i)-(iii) of Theorem \ref{pimsnerpopa} will simply be  called a {\bf basis} for
$M/N$. 
\end{definition}

\noindent \textbf{Existence of bases}: For Case(1) an explicit construction has been given in \cite{PP1}(Proposition 1.3)
 while for Case (2) see \cite{JS} (Lemma 5.7.3), and \cite{JP} (Proposition 2.5). For Case(2) see also \cite{EVA}(section 9.4).

\begin{remark}
 Comparing \cite{PP1} (Proposition 1.3(c)(2)) and Theorem\\\ref{pimsnerpopa} we remark that any Pimsner-Popa 
 basis for  $II_1$ factor inclusions is automatically a basis according to our notion. Also,  motivated by 
 \cite{PP1} and \cite{KO}, Watatani has introduced (in the memoir \cite{WAT}) what he calls `quasi-basis for conditional expectation $E$' in a purely algebraic setting. Assuming the existence of quasi-basis
 he developed index for a conditional expectation of index-finite type, called it $Index~ E$, which he shows to be independent of the choice of quasi-basis. He then investigated 
 Jones' index theory in $C^*$-algebra setting. Observe that, Theorem \ref{pimsnerpopa} (1) now says that $Index~ E$ is same as Jones' index for Case (1) and equals to 
 ${\|G\|}^{2}$ for Case (2).
\end{remark}

\begin{remark}
\label{r}
 The row vector $ [E_N(x{\lambda_1}^*),..,E_N(x{\lambda_n}^*)]\in M_{1\times n}(N)Q$ and conversely if $[x_1,..,x_n]
\in M_{1\times n}(N)Q $ satisfies $x=\sum_{i=1}^n x_i\lambda_i $ then $ x_j=E_N(x{\lambda_j}^*)$ for all $j \in I$.
\end{remark}
Exactly the same proof as in \cite{JS} (Proposition 4.3.3(b)(ii)) works.

\begin{corollary}
\label{bla1}
 
Let $N \subseteq M\subseteq P $ be a tower of $ II_1 $ factors with $[P:N]< \infty $ (or
a tower of finite dimensional $C^*$-algebras where the two inclusions are connected
with inclusion matrices $G$ and $H$ respectively). In either Case, let $\{\lambda_i:
1\leq i\leq m\}$ be a basis for $M/N$ and $\{\mu_j: 1\leq j\leq n\}$ be a basis for 
$P/M$, then  $\{\lambda_i \mu_j :1\leq i\leq m,1\leq j\leq n\}$ is a basis for $P/N$.

\end{corollary}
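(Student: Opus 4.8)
The plan is to verify condition (3) of Theorem \ref{pimsnerpopa} for the family $\{\lambda_i\mu_j\}$ relative to the inclusion $N\subseteq P$. Write $E_N^M\colon M\to N$, $E_M^P\colon P\to M$ and $E_N^P\colon P\to N$ for the trace-preserving conditional expectations attached to the three inclusions. First I would fix $x\in P$ and expand it with the basis $\{\mu_j\}$ of $P/M$, namely $x=\sum_j E_M^P(x\mu_j^*)\mu_j$. Since each coefficient $E_M^P(x\mu_j^*)$ lies in $M$, I would then expand it with the basis $\{\lambda_i\}$ of $M/N$, getting $x=\sum_{i,j}E_N^M(E_M^P(x\mu_j^*)\lambda_i^*)\,\lambda_i\mu_j$. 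The remaining point is to recognise the coefficient $E_N^M(E_M^P(x\mu_j^*)\lambda_i^*)$ as $E_N^P(x(\lambda_i\mu_j)^*)=E_N^P(x\mu_j^*\lambda_i^*)$: here I would use that $\lambda_i^*\in M$, so $M$-bimodularity of $E_M^P$ gives $E_M^P(x\mu_j^*)\lambda_i^*=E_M^P(x\mu_j^*\lambda_i^*)$, and then the tower identity $E_N^P=E_N^M\circ E_M^P$. This produces $x=\sum_{i,j}E_N^P(x(\lambda_i\mu_j)^*)(\lambda_i\mu_j)$ for all $x\in P$, which is exactly condition (3) for $\{\lambda_i\mu_j\}$; hence it is a basis for $P/N$. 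One could equally finish with the adjoint form of (3) obtained by taking adjoints, and Remark \ref{r} shows in addition that this expansion is the unique one with coefficients in $N$, so that no $\lambda_i\mu_j$ is redundant.

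The only step that is not pure formality is the tower identity $E_N^P=E_N^M\circ E_M^P$. Both sides are unital, positive, $*$-preserving, $N$-bimodular projections onto $N$, so by uniqueness of the conditional expectation it suffices to check that $E_N^M\circ E_M^P$ preserves the trace of $P$; and for that one needs the trace of $P$ to restrict to the trace of $M$ used in defining $E_N^M$. In Case (1) this is automatic, a $II_1$ factor having a unique trace, and moreover $[P:N]=[P:M]\,[M:N]<\infty$, so a basis for $P/N$ makes sense. In Case (2) the requirement is that the Markov trace of $P$ for $M\subseteq P$ restrict to the Markov trace of $M$ for $N\subseteq M$; this is exactly the situation one is in when $N\subseteq M\subseteq P$ sits inside a Jones tower, where all the relevant traces are restrictions of a single trace, and I would either take it as part of the standing hypothesis on the tower or record it as the one fact needing separate justification. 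One also checks that in Case (2) the inclusion matrix of $N\subseteq P$ is the product $GH$, which is again connected, so that the notion of basis for $P/N$ is available.

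So I expect the hard part to be not the two-line computation but this compatibility of traces: keeping track of which trace, hence which conditional expectation, governs each of the three inclusions, and making sure the three fit together so that the conditional expectations compose. Once that is granted the argument runs identically in both Cases, and the underlying picture --- the two-step coordinate decomposition $P\to M_{1\times n}(M)\to M_{1\times mn}(N)$ coming from the two bases, each step being the bijection of Remark \ref{r} --- is precisely what explains why the product family is again a basis.
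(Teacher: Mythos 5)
Your proposal is correct and follows essentially the same route as the paper: expand $x\in P$ first along the basis $\{\mu_j\}$ of $P/M$, then expand each coefficient $E_M(x\mu_j^*)\in M$ along $\{\lambda_i\}$, and identify the resulting coefficients as $E_N(x\mu_j^*\lambda_i^*)$ via $M$-bimodularity and the tower identity $E_N^P=E_N^M\circ E_M^P$, which verifies condition (3) of Theorem \ref{pimsnerpopa}. The only difference is that you make explicit the composition of conditional expectations and the compatibility of traces that the paper leaves implicit.
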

\begin{proof}
 Let $x\in P$ and as $\{\mu_j\}$ is a basis for $P/M$, we get,
$x=\sum_{j=1}^nE_M(x{\mu_j}^*)\mu_j$. Now note $E_M(x{\mu_j}^*) \in M$ and $\{\lambda_i\}
$ is a basis for $M/N$. Now condition(3) of the Theorem \ref{pimsnerpopa}  yields,

\begin{equation*}
E_M(x{\mu_j}^*)=\\
\sum_{i=1}^m
E_N\{E_M(x{\mu_j}^*){\lambda_i}^*\}\lambda_i.                                                            
\end{equation*}
 Thus we get,
                    \begin{align*}
                     x &=\sum_{j=1}^n[\sum_{i=1}^m E_N\{E_M(x{\mu_j}^*){\lambda_i}^*\}\lambda_i]\mu_j\\
                       &= \sum_{i=1}^m\sum_{j=1}^nE_N\{E_M(x{\mu_j}^*{\lambda_i}^*)\}\lambda_i \mu_j.
                    \end{align*}
Thus again applying (3) of the Theorem \ref{pimsnerpopa} we get that\\$\{\lambda_i\mu_j:1\leq i
\leq m,1\leq j\leq n\}$ is a basis for $P/N$.
\end{proof}

\begin{corollary}
\label{bla2}
If $\{\lambda_i: i\in I=\{1,2,...n\}\}$ is a basis for $M/N$, then $ \{{\tau}^{-1/2}e_1\lambda_i\}
$ is a basis for $M_1/M$. Where $N\subseteq M$ is an in inclusion as in Case (1) or Case (2).
\end{corollary}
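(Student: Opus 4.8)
The plan is to verify condition (2) of Theorem~\ref{pimsnerpopa} for the candidate set $\{\mu_i := {\tau}^{-1/2}e_1\lambda_i : i \in I\}$ inside the tower $M \subset M_1 \stackrel{e_2}{\subset} M_2$, i.e.\ to show that $\sum_i \mu_i^* e_2 \mu_i = 1$ in $M_2$. First I would write out $\sum_i \mu_i^* e_2 \mu_i = {\tau}^{-1}\sum_i {\lambda_i}^* e_1 e_2 e_1 \lambda_i$. The key input is the Jones relation $e_1 e_2 e_1 = \tau e_1$ (valid in both Cases, since the Markov trace on $M_1$ has the same modulus $\tau$ and the basic construction tower satisfies the standard Temperley–Lieb–type relations, see e.g.\ \cite{JON} Proposition~3.1.4 / \cite{GHJ}). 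Substituting this, ${\tau}^{-1}\sum_i {\lambda_i}^*(\tau e_1)\lambda_i = \sum_i {\lambda_i}^* e_1 \lambda_i = 1$, where the last equality is precisely condition (2) of Theorem~\ref{pimsnerpopa} applied to the original basis $\{\lambda_i\}$ for $M/N$. That completes the verification.

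Before concluding, I should check that the hypotheses of Theorem~\ref{pimsnerpopa} genuinely apply to the inclusion $M \subseteq M_1$: in Case (1) this is again an inclusion of $II_1$ factors with $[M_1:M] = [M:N] = {\tau}^{-1} < \infty$, and the Markov trace on $M_1$ has modulus $\tau$; in Case (2), $M \subseteq M_1$ is again a connected inclusion of finite dimensional $C^*$-algebras (the inclusion matrix for $M \subseteq M_1$ is $G^t$ when $G$ is the inclusion matrix for $N \subseteq M$), with Markov trace again of modulus $\tau = \|G\|^{-2}$. So the ambient framework of the theorem is in force, and it is legitimate to use criterion (2) to certify that a set is a basis for $M_1/M$.

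The only genuine subtlety — and the step I expect to be the main (minor) obstacle — is bookkeeping about \emph{which} Jones projection and \emph{which} trace modulus are in play, since the statement mixes the projection $e_1$ (implementing $E_N$ on $M \to N$) with the next-level projection $e_2$ (implementing $E_M$ on $M_1 \to M$) and the relevant modulus for the pair $M \subseteq M_1$. Once one is careful that the relation $e_1 e_2 e_1 = \tau e_1$ holds with the \emph{same} $\tau$ (which is exactly the force of the Markov property propagating up the tower), the computation is a one-line substitution. An alternative, essentially equivalent route would be to verify condition (3) directly: for $y \in M_1$, write $y = y_0 e_1 y_1'$ on a strongly dense subalgebra (using Lemma~\ref{imp}-type decompositions) and check $y = \sum_i E_M(y\mu_i^*)\mu_i$ by the same $e_1 e_2 e_1 = \tau e_1$ manipulation together with $E_M(e_1 x) = \tau x$-type identities; but the route through condition (2) is cleanest and I would present that one.
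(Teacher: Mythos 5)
Your proposal is correct and follows essentially the same route as the paper: verify condition (2) of Theorem \ref{pimsnerpopa} for the pair $M\subseteq M_1$ via the relation $e_1e_2e_1=\tau e_1$, after noting that $M\subseteq M_1$ is again of the required type (finite index in Case (1), connected with inclusion matrix $G^t$ in Case (2)). No further comment is needed.
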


\begin{proof}
In the Case (1) $ M_1 $ is a $II_1$ factor such that $[M_1:M]=[M:N]< \infty$. In Case (2)
inclusion matrix for $M\subseteq M_1$ is $G^t$ and hence is a connected inclusion. Now in both
the Cases let $e_2$ be the Jones projection for the inclusion $M\subseteq M_1$. Now,
  \begin{align*}
   \sum_{i=1}^n\{\tau^{-1/2}e_1 \lambda_i\}^*e_2\{\tau^{-1/2}e_1\lambda_i\} &=
        {\tau}^{-1}\sum_{i=1}^n{\lambda_i}^*e_1e_2e_1\lambda_i\\
        &={\tau}^{-1}\tau\sum_{i=1}^n{\lambda_i}^*e_1\lambda_i\\
        &=1~~~  (\textrm{since} ~~\{\lambda_i\}~~ \textrm{is a basis}).
  \end{align*}
Now (2) of the Theorem \ref{pimsnerpopa} yields the result. 
\end{proof}

\begin{remark}\label{well-def}
From Corollary \ref{bla2} every element of $M_1$ is expressible in the form $\sum_{i=1}^{n} x_ie_1y_i$ for some $x_i, y_i \in M$ (in fact, 
$x = \tau^{-1}\sum_{i=1}^n E_M(x\lambda_i^*e_1 ) e_1\lambda_i$); however this does not allow us to define a *-homomorphism on $M_1$ by merely specifying the image of an element of the form $xe_1y$, as we will need to verify that such a `definition' is unambiguous; but we may define the above {\it canonical} decomposition to unambiguously define maps on $M_1$ once we know where to map elements of $N$, the basis vectors $\lambda_i$ and $e_1$. This problem of ambiguity was part of the reason for us the study this notion of bases. The reader need only compare the crisp clarity of the proofs of unambiguity in the definition of $\alpha_1$ in Theorem \ref{keshab} and of $\phi$ in Lemma \ref{fvrt} with the corresponding proofs of Theorem 2.1 in \cite{BUR} (actually only to be found in the arXiv version) and of Proposition 1.2 in \cite{PP2}, to appreciate this remark.
\end{remark}

\begin{corollary}
\label{bla}
Let $N\subseteq M$ as in Case (1) or (2) and $\{\lambda_i:i\in I\}$  be a basis for
$M/N$. Define $\widehat{i(k)} = (i_1,i_2,......i_k)\in I^k,k\geqslant 1$ and 
$\lambda_{\widehat{i(k)}}= {\tau}^{-k(k-1)/4}\lambda_{i_1}e_1\lambda_{i_2}e_2e_1\lambda_{i_3}
.....\lambda_{i_{k-1}}e_{k-1}.....e_1\lambda_{i_k}$.Then $\{\lambda_{\widehat{i(k)}} :
\widehat{i(k)}\in I^k\}$ is a basis for $M_{k-1}/N$.
\end{corollary}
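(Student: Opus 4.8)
The plan is to induct on $k$, using Corollary \ref{bla1} (compatibility of bases with towers) together with Corollary \ref{bla2} (the basis $\{\tau^{-1/2}e_1\lambda_i\}$ for $M_1/M$) applied repeatedly along the Jones tower. The base case $k=1$ is trivial: $\lambda_{\widehat{i(1)}} = \lambda_{i_1}$ and $\{\lambda_i\}$ is by hypothesis a basis for $M_0/N = M/N$. For the inductive step, suppose $\{\lambda_{\widehat{i(k)}}\}$ is a basis for $M_{k-1}/N$. Since the tower $N \subseteq M \subseteq M_1 \subseteq \cdots$ is, in each Case, a tower satisfying the hypotheses of Corollary \ref{bla1} (finite index for $II_1$ factors; connected inclusions with the appropriate inclusion matrices $G, G^t, G, \ldots$ in the finite-dimensional Case — here one uses the standard fact that $[M_j:M_{j-1}] = [M:N]$ and that the inclusion matrices alternate between $G$ and $G^t$), it suffices to exhibit a basis for $M_{k-1} \subseteq M_k$ and then multiply.

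First I would pin down the basis for $M_k / M_{k-1}$. Corollary \ref{bla2} is stated for $N \subseteq M \subseteq M_1$; the identical argument, applied to the pair $M_{k-2} \subseteq M_{k-1} \subseteq M_k$ (whose Jones projection is $e_k$, and where $\{\lambda_{\widehat{i(k)}}\}$ plays the role of the basis for the lower inclusion, $M_{k-1}/\text{(something)}$)—wait: Corollary \ref{bla2} produces a basis for $M_1/M$ out of a basis for $M/N$, i.e. it raises the tower by one rung while shifting which inclusion one looks at. The cleanest route is instead to apply Corollary \ref{bla2} directly to the inclusion $M_{k-2} \subseteq M_{k-1}$: if $\{\mu_\ell\}$ is a basis for $M_{k-1}/M_{k-2}$, then $\{\tau^{-1/2} e_k \mu_\ell\}$ is a basis for $M_k/M_{k-1}$. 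Iterating Corollary \ref{bla2} from the bottom, one finds by a secondary induction that $\{\tau^{-(k-1)/2}\, e_k e_{k-1}\cdots e_2 \, \lambda_{i}\}_{i \in I}$ is a basis for $M_k/M_{k-1}$ — here each application of Corollary \ref{bla2} prepends one more Jones projection and one more factor of $\tau^{-1/2}$, and the Jones relations $e_j e_{j\pm1} e_j = \tau e_j$, $[e_j, e_{j'}]=0$ for $|j-j'|\ge 2$ are exactly what make the verification of condition (2) of Theorem \ref{pimsnerpopa} go through.

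Now combine: by Corollary \ref{bla1} applied to $N \subseteq M_{k-1} \subseteq M_k$, the set
\[
\bigl\{\, \lambda_{\widehat{i(k)}} \cdot \tau^{-(k-1)/2} e_k e_{k-1}\cdots e_2 \,\lambda_{i_{k+1}} \;:\; \widehat{i(k)} \in I^k,\ i_{k+1} \in I \,\bigr\}
\]
is a basis for $M_k/N$. It then remains to check that this set, after reindexing $\widehat{i(k+1)} = (i_1,\ldots,i_{k+1})$, coincides with the prescribed family $\{\lambda_{\widehat{i(k+1)}}\}$. Expanding $\lambda_{\widehat{i(k)}} = \tau^{-k(k-1)/4}\lambda_{i_1}e_1\lambda_{i_2}e_2e_1\lambda_{i_3}\cdots\lambda_{i_{k-1}}e_{k-1}\cdots e_1\lambda_{i_k}$ and appending $\tau^{-(k-1)/2} e_k e_{k-1}\cdots e_2\lambda_{i_{k+1}}$, the scalar becomes $\tau^{-k(k-1)/4 - (k-1)/2} = \tau^{-(k+1)k/4}$, which matches the exponent $-k(k-1)/4$ with $k$ replaced by $k+1$. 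The word in the $e_j$'s and $\lambda$'s also matches the definition of $\lambda_{\widehat{i(k+1)}}$ on the nose, since the new block contributed is $e_k e_{k-1}\cdots e_1 \lambda_{i_{k+1}}$ — but note the stated product has the block $\lambda_{i_k} e_k e_{k-1}\cdots e_1\lambda_{i_{k+1}}$, and our appended block only supplies $e_k\cdots e_2\lambda_{i_{k+1}}$, so one must be slightly careful that the trailing $e_1$ and the placement of $\lambda_{i_k}$ are absorbed correctly — resolving this bookkeeping is the one genuinely fiddly point.

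The main obstacle, then, is not conceptual but combinatorial: getting the powers of $\tau$ and the exact ordering of the Jones projections to line up between "iterate Corollary \ref{bla2}, then apply Corollary \ref{bla1}" and the closed-form expression for $\lambda_{\widehat{i(k)}}$ given in the statement. I expect this to be handled by a clean secondary induction establishing the precise form of the basis for $M_k/M_{k-1}$ (including the scalar), after which the exponent arithmetic $\tfrac{k(k-1)}{4} + \tfrac{k-1}{2} = \tfrac{(k+1)k}{4}$ and the telescoping of the $e_j$-words is routine. An alternative, perhaps cleaner, organization avoids Corollary \ref{bla2} entirely and instead proves the statement by inducting with Corollary \ref{bla1} on the two-step factorization $M_{k-1}/N = (M_{k-1}/M_1) \cdot (M_1/N)$ once one knows the analogue of the claim for the shifted tower $M_1 \subseteq M_2 \subseteq \cdots$; but the $e_1$-endpoint bookkeeping is essentially the same either way.
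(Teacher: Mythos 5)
Your overall strategy --- induct on $k$, obtain a basis for $M_k/M_{k-1}$ by iterating Corollary \ref{bla2}, and then splice it onto the inductive basis for $M_{k-1}/N$ via Corollary \ref{bla1} --- is exactly the paper's argument. But the intermediate claim on which everything rests is stated incorrectly, and the ``fiddly point'' you defer is not mere bookkeeping: it is the symptom of an off-by-one error. Iterating Corollary \ref{bla2} from the bottom gives, as a basis for $M_k/M_{k-1}$, the set
\[
\{\,\tau^{-k/2}\,e_k e_{k-1}\cdots e_1\,\lambda_{i}\;:\;i\in I\,\},
\]
i.e.\ the word of Jones projections runs all the way down to $e_1$ and there are $k$ factors of $\tau^{-1/2}$: the basis for $M_1/M$ is $\{\tau^{-1/2}e_1\lambda_i\}$, the basis for $M_2/M_1$ is $\{\tau^{-1/2}e_2\cdot(\tau^{-1/2}e_1\lambda_i)\}=\{\tau^{-1}e_2e_1\lambda_i\}$, and so on --- at each stage you must prepend $\tau^{-1/2}e_{j}$ to the \emph{previously constructed} basis for $M_{j-1}/M_{j-2}$, not to $\lambda_i$ alone. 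Your set $\{\tau^{-(k-1)/2}e_k\cdots e_2\lambda_i\}$ is not a basis for $M_k/M_{k-1}$: already for $k=1$ it reads $\{\lambda_i\}\subseteq M$, which cannot satisfy condition (3) of Theorem \ref{pimsnerpopa} for $M_1/M$, and for $k=2$ one has $\sum_i\tau^{-1}\lambda_i^*e_2e_3e_2\lambda_i=e_2\sum_i\lambda_i^*\lambda_i\neq 1$ in general. Consistently with this, the exponent identity you invoke, $\tfrac{k(k-1)}{4}+\tfrac{k-1}{2}=\tfrac{(k+1)k}{4}$, is false (the left-hand side equals $\tfrac{(k-1)(k+2)}{4}$).

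With the correct formula the proof closes with no residue at all: Corollary \ref{bla1} applied to $N\subseteq M_{k-1}\subseteq M_k$ yields the basis $\{\lambda_{\widehat{i(k)}}\cdot\tau^{-k/2}e_ke_{k-1}\cdots e_1\lambda_{i_{k+1}}\}$ for $M_k/N$, the scalars combine as $\tfrac{k(k-1)}{4}+\tfrac{k}{2}=\tfrac{k(k+1)}{4}$, and the resulting word is literally $\lambda_{i_1}e_1\lambda_{i_2}\cdots\lambda_{i_k}e_k\cdots e_1\lambda_{i_{k+1}}$, i.e.\ $\lambda_{\widehat{i(k+1)}}$ on the nose --- there is no trailing $e_1$ to absorb and no placement of $\lambda_{i_k}$ to reconcile. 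So the gap lies entirely in the secondary induction for $M_k/M_{k-1}$; once that is stated and proved correctly, the rest of your argument coincides with the paper's.
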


\begin{proof}
 Clearly the statement is true for $ k=1$ with the understanding that
$M_0=M$. Suppose the statement is true for $k$. Now applying Corollary \ref{bla2} recursively we get 
$M_k/M_{k-1}$ has basis\\
$\{\tau^{-k/2} e_ke_{k-1}..e_1\lambda_{i_{k+1}}:i_{k+1}\in I\}$. Then applying Corollary \ref{bla1}
we see that $M_k/N$ has basis,\vspace{1mm}\\
$\{\tau^{-k(k-1)/4}\tau^{-k/2}\lambda_{i_1}e_1\lambda_{i_2}e_2e_1\lambda_{i_3}..\lambda_{i_{k-1}}e_{k-1}..e_1
\lambda_{i_k}e_ke_{k-1}..e_1\lambda_{i_{k+1}}\}$ \\$= \{\lambda_{\widehat{i(k+1)}}:\widehat{i(k+1)}\in I^{k+1}\}$;
and the proof of the inductive step is  complete.
\end{proof}

\section{Applications}
\subsection{Compatible automorphisms of \\the Hyperfinite $II_1$ factor}
 \par Consider an inclusion as in Case (2). Then, we  have a unique Markov trace $tr$ on M.
Next consider the Jones tower $N\subseteq M\subseteq M_1\subseteq M_2\subseteq.....$
and let $R$ be the hyperfinite $II_1$ factor arising from this tower\cite{JS}.
Suppose further we have an automorphism $\alpha_0$ on $M$ such that $\alpha_0(N)=N$.
In this present section we shall show using our concept of  basis how we can costruct a
unique extension of $\alpha_0$ to an automorphism $\alpha$ of the hyperfinite $II_1$ factor $R$ which is compatible with respect to 
the tower in the sense of fixing all the Jones projections and leaving $M_i$ invariant.\par This can be thought of as the finite dimensional $C^*$-algebraic version of \cite{LOI} (Lemma 5.1). In that 
paper Loi studied automorphisms for a pair of factors using standard form of von Neumann algebras, whereas our treatment is based on basis for the corresponding
inclusion. In the similar direction in \cite{KAW} the author has dealt with automorphisms commuting with a faithful normal conditional expectation for a pair 
of $\sigma$-finite von Neumann algebras and related this with an action of a locally compact abelian group. See also \cite{SV}, where the author was more 
concerned with commuting squares.

\begin{lemma}
\label{kod}
Let $N,M,tr,\alpha_0$ be as above.Then $\alpha_0$ 
is automatically trace preserving, that is $tr\circ {\alpha_0}=tr$.
\end{lemma}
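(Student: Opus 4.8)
The plan is to exploit the rigidity of the Markov trace in Case (2): on a connected inclusion of finite dimensional $C^*$-algebras there is a \emph{unique} Markov trace of the given modulus $\tau = \|G\|^{-2}$, so any automorphism preserving the algebraic structure must preserve it. First I would observe that since $\alpha_0$ is an automorphism of $M$ with $\alpha_0(N)=N$, the composite $tr\circ\alpha_0$ is again a faithful normal tracial state on $M$, and moreover the inclusion $N\subseteq M$ together with the state $tr\circ\alpha_0$ has the \emph{same} inclusion data as $(N\subseteq M, tr)$: the inclusion matrix $G$ is a combinatorial invariant of the pair $(N\subseteq M)$, unchanged by applying $\alpha_0$. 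Hence $tr\circ\alpha_0$ is a Markov trace of modulus $\tau=\|G\|^{-2}$ for $N\subseteq M$.

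Next I would invoke the uniqueness of the Markov trace for a connected inclusion (stated in Case (2) of Section 2) to conclude $tr\circ\alpha_0 = tr$. The only subtlety is checking that $tr\circ\alpha_0$ really does satisfy the Markov condition, not merely that it is a trace. For this, recall that the Markov property of a trace $\phi$ on $M$ can be characterized intrinsically in terms of the weight vector $\vec{s}$ of $\phi$ relative to the simple summands of $M$ and the inclusion matrix $G$: $\phi$ is Markov of modulus $\tau$ iff $G^t G \,\vec{s} = \tau^{-1}\vec{s}$ (equivalently $\vec{s}$ is the Perron–Frobenius eigenvector of $G^tG$, suitably normalized). Since $\alpha_0$ is a $*$-algebra automorphism carrying $N$ onto $N$, it permutes the minimal central projections of $M$ (and of $N$) in a way that preserves the inclusion matrix, so the weight vector of $tr\circ\alpha_0$ is obtained from that of $tr$ by the same permutation; applying the Perron–Frobenius characterization, $tr\circ\alpha_0$ is Markov of the same modulus $\tau$. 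Then uniqueness forces equality.

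Alternatively, and perhaps more cleanly, I would phrase the whole argument through bases, which is the theme of the paper. Fix a basis $\{\lambda_i : i\in I\}$ for $M/N$ as provided by the existence results quoted after the Definition. By Theorem \ref{pimsnerpopa}(1), the matrix $Q=(E_N(\lambda_i\lambda_j^*))_{i,j}$ is a projection in $M_n(N)$ with $tr_{M_n(N)}(Q)=\tau^{-1}/n$. Now $\{\alpha_0(\lambda_i):i\in I\}$ need not be a basis for $M/N$ with respect to $tr$, but it \emph{is} a basis for $M/N$ with respect to the trace $tr':=tr\circ\alpha_0^{-1}$, because all three conditions of Theorem \ref{pimsnerpopa} are preserved under applying the automorphism $\alpha_0$ (condition (2) uses only the Jones projection for $(N\subseteq M, tr')$, which is $\alpha_0 e_1 \alpha_0^{-1}$ after identifying $M_1$ appropriately; condition (3) uses $E_N^{tr'} = \alpha_0\circ E_N^{tr}\circ\alpha_0^{-1}$). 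Computing $tr'$ on the associated projection and comparing traces of the summand decomposition, one reads off $\tau^{-1}$ again, confirming that $tr'$ is Markov of modulus $\tau$; uniqueness then gives $tr'=tr$, i.e. $tr\circ\alpha_0^{-1}=tr$, hence $tr\circ\alpha_0 = tr$.

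The main obstacle is the bookkeeping in the first approach: one must be careful that ``same inclusion matrix'' is literally true, i.e. that an automorphism of $M$ fixing $N$ setwise induces compatible permutations of the simple summands of $M$ and of $N$ under which $G$ is invariant, and that the Markov condition is genuinely determined by $(G,\vec{s})$ and nothing else. Once that functoriality of the inclusion data under automorphisms is pinned down, the conclusion is immediate from uniqueness of the Markov trace; no hard analysis is involved, only the linear-algebraic characterization of the Markov property together with Perron–Frobenius.
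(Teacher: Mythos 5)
Your first argument is essentially the paper's own proof: the paper likewise observes that $\alpha_0$ permutes the minimal central projections of $N$ and $M$ so that $G=TGS$ for permutation matrices $T,S$, and then concludes via uniqueness of the positive Perron--Frobenius eigenvector of $G^tG$ that the trace vector is fixed, i.e.\ $tr\circ\alpha_0=tr$. The proposal is correct (the alternative basis-based sketch is looser, but the main route matches), so no further comparison is needed.
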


\begin{proof}
Let the minimal central projections in $N$ be $\{p_1,p_2,.....p_m\}$ and those in $M$ be 
$\{q_1,q_2,....q_n\}$. Then the inclusion matrix $G$ is an $m\times n$ matrix. Observe $\alpha_0$ 
permutes $p_i$'s and $q_j$'s. Say $p_i\mapsto p_{\tau(i)}$
and $q_j\mapsto q_{\sigma_(j)}$ for $\tau\in{\Sigma}_m$ and $\sigma\in{\Sigma}_n$. As $\alpha_0$
is an automorphism,$G(i,j)=G(\tau(i),\sigma(j))$. Equivalently, $G=TGS$ for permutation matrices $T$
and $S$ of sizes $m$ and $n$ respectively. Let $\vec{t}$ be the trace vector corresponding 
to $tr$ for $M$. Then  it is the unique positive  Perron-Frobenius eigenvector of $G^{t}G$, hence 
also of $S^{-1}G^{t}GS$. But that implies $S\vec{t}$ is a positive eigenvector of $G^{t}G$ with the 
same eigenvalue as of $\vec{t}$ and by uniqueness of Perron -Frobenius theory(see chapter XIII\cite{GAN})we get
$S\vec{t}=\vec{t}$. Hence $tr\circ{\alpha_0}=tr$.
\end{proof}
\par The above proof is due to Vijay Kodiyalam. I sincerely thank him for this.
\begin{theorem}
\label{keshab}
Let $\alpha_0$ be an automorphism of $M$ such that $\alpha_0(N)=N$. Then there is a unique (trace preserving)
automorphism $\alpha_1$  of $M_1$ such that $\alpha_1(e_1)=e_1$, $\alpha_1(M)=M$ and the
restriction  of $\alpha_1$ to $M$  is $\alpha_0$.
\end{theorem}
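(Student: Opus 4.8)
The plan is to use the canonical decomposition of elements of $M_1$ furnished by Corollary \ref{bla2} and Remark \ref{well-def} to \emph{define} $\alpha_1$, then check it is a well-defined $*$-automorphism fixing $e_1$ and restricting to $\alpha_0$, and finally prove uniqueness. Fix a basis $\{\lambda_i : i \in I\}$ for $M/N$. By Remark \ref{well-def}, every $x \in M_1$ has the canonical expansion $x = \tau^{-1}\sum_{i} E_M(x\lambda_i^* e_1) e_1 \lambda_i$, so if $\alpha_1$ is to fix $e_1$, restrict to $\alpha_0$ on $M$, and be an algebra homomorphism, it is forced to satisfy $\alpha_1(x) = \tau^{-1}\sum_i \alpha_0\!\bigl(E_M(x\lambda_i^* e_1)\bigr)\, e_1\, \alpha_0(\lambda_i)$. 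I would take this formula as the \emph{definition} of $\alpha_1$. The point of working with the canonical expansion (rather than an arbitrary representation $x = \sum x_j e_1 y_j$) is precisely that the coefficients $E_M(x\lambda_i^* e_1)$ are uniquely determined by $x$, so no ambiguity arises — this is the payoff advertised in Remark \ref{well-def}.

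Next I would verify the algebraic properties. Linearity and $*$-preservation are immediate from the formula together with the facts that $\alpha_0$ is a $*$-automorphism, $e_1 = e_1^*$, and $E_M$ intertwines appropriately; one should also check $\alpha_1(1) = 1$ using condition (2)/(3) of Theorem \ref{pimsnerpopa} (i.e.\ $\tau^{-1}\sum_i E_M(\lambda_i^* e_1) e_1 \lambda_i = 1$, equivalently $\sum_i \lambda_i^* e_1 \lambda_i = 1$). Multiplicativity is the step that needs real work: it suffices to check $\alpha_1(\xi \eta) = \alpha_1(\xi)\alpha_1(\eta)$ on the strongly dense $*$-subalgebra $M e_1 M$, and by bilinearity it is enough to treat generators of the form $\xi = a e_1 b$, $\eta = c e_1 d$ with $a,b,c,d \in M$. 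Using $e_1 b c e_1 = E_N(bc) e_1$ one gets $\xi\eta = a E_N(bc) e_1 d$, and one must show that applying the definition of $\alpha_1$ to $a e_1 b$, to $c e_1 d$, and to $a E_N(bc) e_1 d$ is consistent with $\alpha_1(a e_1 b) = \alpha_0(a) e_1 \alpha_0(b)$ etc. Here I would first establish the simpler identity that the definition gives $\alpha_1(a e_1 b) = \alpha_0(a)\, e_1\, \alpha_0(b)$ for all $a, b \in M$ — this follows by computing $E_M(a e_1 b \lambda_i^* e_1) = a E_N(b\lambda_i^*) \cdot \tau$ (using Lemma \ref{imp} and $e_1 z e_1 = E_N(z) e_1$), substituting, and collapsing $\sum_i E_N(b\lambda_i^*)\lambda_i = b$ via condition (3) of Theorem \ref{pimsnerpopa}; one also uses $\alpha_0 \circ E_N = E_N \circ \alpha_0$, which holds because $\alpha_0(N) = N$ and $\alpha_0$ is trace-preserving by Lemma \ref{kod}. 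Once $\alpha_1(a e_1 b) = \alpha_0(a) e_1 \alpha_0(b)$ is in hand, multiplicativity on $M e_1 M$ is a short computation and, since multiplication is separately strongly continuous and $\alpha_1$ is (one checks) strongly continuous / trace-preserving, it extends to all of $M_1$. Trace preservation itself follows from the canonical-expansion formula together with the Markov property, or more cleanly from $\alpha_1(a e_1 b) = \alpha_0(a) e_1 \alpha_0(b)$ and $tr_{M_1}(a e_1 b) = \tau\, tr(ab) = \tau\, tr(\alpha_0(a)\alpha_0(b))$.

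Then $\alpha_1$ is surjective: using the same basis, $\alpha_0$ being onto $M$ shows every generator $a e_1 b$ is hit, and $\alpha_1$ being trace-preserving and injective (injectivity is automatic for a trace-preserving $*$-homomorphism of a finite von Neumann algebra, or follows since $\alpha_1^{-1}$ can be built the same way from $\alpha_0^{-1}$) gives that it is an automorphism. From the established identity $\alpha_1(e_1) = \alpha_1(1 \cdot e_1 \cdot 1) = \alpha_0(1) e_1 \alpha_0(1) = e_1$ and $\alpha_1|_M = \alpha_0$ (take $b = 1$... more precisely, for $m \in M$ write $m = m \cdot 1$ and note the formula gives $\alpha_0(m)$, or simply observe $M \subseteq M_1$ with canonical expansion having $e_1$-free... in any case this is direct). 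Finally, uniqueness: if $\beta$ is any automorphism of $M_1$ with $\beta(e_1) = e_1$, $\beta(M) = M$, $\beta|_M = \alpha_0$, then $\beta(a e_1 b) = \beta(a)\beta(e_1)\beta(b) = \alpha_0(a) e_1 \alpha_0(b) = \alpha_1(a e_1 b)$ for all $a, b \in M$, so $\beta = \alpha_1$ on the strongly dense subalgebra $M e_1 M$, and both are strongly continuous (trace-preserving), hence $\beta = \alpha_1$ on $M_1$. The main obstacle is the well-definedness/multiplicativity bookkeeping — specifically getting the identity $\alpha_1(a e_1 b) = \alpha_0(a) e_1 \alpha_0(b)$ cleanly from the canonical-expansion definition, since everything else follows formally from it; the key technical inputs are Lemma \ref{imp}, the relation $e_1 z e_1 = E_N(z) e_1$, condition (3) of Theorem \ref{pimsnerpopa}, and the commutation $\alpha_0 \circ E_N = E_N \circ \alpha_0$ (which rests on Lemma \ref{kod}).
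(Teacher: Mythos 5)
Your proposal is correct and follows essentially the same route as the paper: both define $\alpha_1$ by the canonical expansion $\alpha_1(x)=\tau^{-1}\sum_i\alpha_0\bigl(E_M(x\lambda_i^*e_1)\bigr)e_1\alpha_0(\lambda_i)$ and verify the required properties from Lemma \ref{imp}, condition (3) of Theorem \ref{pimsnerpopa}, and the commutation $\alpha_0\circ E_N=E_N\circ\alpha_0$ resting on Lemma \ref{kod}; the only organizational difference is that you route multiplicativity through the generator identity $\alpha_1(ae_1b)=\alpha_0(a)e_1\alpha_0(b)$, whereas the paper directly compares the double basis expansions of $\alpha_1(x)\alpha_1(y)$ and $\alpha_1(xy)$. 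The one detail you gloss over --- needed both for $\alpha_1(1)=1$ and for $\alpha_1|_M=\alpha_0$ --- is that $\{\alpha_0(\lambda_i)\}$ is again a basis for $M/N$; the paper establishes this as its first step, and in your setup it follows in one line by applying $\alpha_0$ to condition (3) and using $\alpha_0\circ E_N=E_N\circ\alpha_0$.
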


\begin{proof}
We know there is a basis for $M/N$. Fix such a basis $\{\lambda_i:i\in I\}$. Then we show $\{{\alpha_0(\lambda_i)}
:i\in I\}$ is  also a basis for $M/N$. Let $Q_1$ be the matrix with $(i,j)$ entry given by 
$q_1(i,j)=E_N\{\alpha_0(\lambda_i{\lambda_j}^*)\}$.\\Now, 
   \begin{align*}
      tr_{M_n(N)}(Q_1) &= (1/n)\sum_{i\in I}tr(q_1(i,i))\\
      & =  (1/n)\sum_{i\in I}tr[E_N\{\alpha_0(\lambda_i{\lambda_j}^*)\}]\\
      & = (1/n)\sum_{i\in I} tr (\lambda_i {\lambda_j}^*) & ~~{\rm (by ~Lemma~ (\ref{kod})})\\
      & = tr_{M_n(N)}Q.
    \end{align*}  
 Thus it follows from the Theorem  \ref{pimsnerpopa}, that $\{\alpha_0(\lambda_i)\}$ is a basis for $M/N$.
 Observe since $\alpha_0$ leaves $N$ invariant it follows that $E_N(\alpha_0(x))=\alpha_0(E_N(x))$.
Let  $x\in M_1$, Corollary \ref{bla2} then implies \\
      \begin{equation*}
       x=\sum_{i\in I}{\tau}^{-1}E_M(x{\lambda_i}^*e_1)e_1\lambda_i.
      \end{equation*}

Then define,
        
\begin{equation*}
 \alpha_1(x)={\tau}^{-1}\sum_{i\in I}\alpha_0(E_M(x{\lambda_i}^*e_1))e_1\alpha_0(\lambda_i).
\end{equation*}
There is clearly no ambiguity in the definition of $\alpha_1$.
 Next we show  that $\alpha_1$ is a homomorphism. Consider $y\in M_1$.
 Now using the properties of Jones' projection and the fact that $\alpha_0$ is a homomorphism we get the following series of equations:
 \begin{align*}
      &\alpha_1 (x) \alpha_1 (y)\\
      &\qquad ={\tau}^{-2}\sum_{i,j}\alpha_0[E_M(x{\lambda_i}^*e_1)]e_1
        \alpha_0 (\lambda_i) \alpha_0[E_M(y{\lambda_j}^*e_1)]e_1\alpha_0(\lambda_j)\\
      &\qquad= {\tau}^{-2} \sum_{i,j}\alpha_0[E_M(x{\lambda_i}^*e_1)]
	   E_N(\alpha_0[\lambda_i E_M(y{\lambda_j}^*e_1)])e_1 \alpha_0(\lambda_j)\\
      &\qquad={\tau}^{-2}\sum_{i,j}\alpha_0
	   [E_M(x{\lambda_i}^*e_1)E_N(\lambda_iE_M(y{\lambda_j}^*e_1))]e_1\alpha_0(\lambda_j)\\
	   & ~~~~~~~~~~~\hfill (\text{since $\alpha_0$ and $E_N$ commute})\\
      &\qquad ={\tau}^{-2}\sum_{i,j}\alpha_0[E_M\{x{\lambda_i}^*e_1E_N(\lambda_iE_M(y{\lambda_j}^*e_1))\}]e_1 \alpha_0(\lambda_j)\\
      &\qquad = {\tau}^{-2}\sum_j\alpha_0[E_M\{xE_M(y{\lambda_j}^*e_1)e_1\}]e_1\alpha_0(\lambda_j)~~~~~~~~~~~~~~~~~~~~~(1)\\
      &\qquad\qquad\qquad\qquad\qquad(\text{since $\sum_i{\lambda_i}^*e_1\lambda_i=1$}).
  \end{align*}
Similarly,
\begin{align*}
  &\alpha_1(xy) \\
  &\qquad = {\tau}^{-1}\sum_i\alpha_0[E_M(xy{\lambda_i}^*e_1)]e_1\alpha_0(\lambda_i)\\
  & \qquad= {\tau}^{-1}\sum_i\alpha_0[E_M\{x(\sum_j{\tau}^{-1}E_M(y{\lambda_j}^*e_1)e_1\lambda_j){\lambda_i}^*e_1\}]e_1\alpha_0(\lambda_i)\\
  & \qquad= {\tau}^{-2}\sum_{i,j}\alpha_0[E_M\{xE_M(y{\lambda_j}^*e_1)E_N(\lambda_j{\lambda_i}^*)e_1\}]e_1\alpha_0(\lambda_i)\\
  & \qquad= {\tau}^{-2}\sum_{i,j}\alpha_0[E_M\{xE_M(y{\lambda_j}^*e_1E_N(\lambda_j{\lambda_i}^*))e_1\}]e_1\alpha_0(\lambda_i)\\
  & \qquad= {\tau}^{-2}\sum_i\alpha_0[E_M\{xE_M(y{\lambda_i}^*e_1)e_1\}]e_1\alpha_0(\lambda_i)~~~~~~~~~~~~~~~~~~~~~(2)\\
  &\qquad\qquad\qquad\qquad\text{(since $\sum_j{\lambda_j}^*e_1\lambda_j=1$)}.
\end{align*}
Now comparing equations (1) and (2) we conclude that
$\alpha_1$ is indeed a homomorphism. Next we show $\alpha_1$ fixes $e_1$. Observe,
\begin{equation*}
 e_1={\tau}^{-1}\sum_iE_M(e_1{\lambda_i}^*e_1)e_1\lambda_i.
\end{equation*}
Now using our definition of $\alpha_1$ and property of Jones' projection it is easy to see that,
\begin{align*}
  \alpha_1(e_1) & = {\tau}^{-1}\sum_i\alpha_0[E_M\{E_N({\lambda_i}^*)e_1\}]e_1\alpha_0(\lambda_i)\\ 
  & = \sum_i \alpha_0(E_N({\lambda_i}^*))e_1\alpha_0(\lambda_i)\\
  & = \sum_iE_N(\alpha_0({\lambda_i}^*))e_1 \alpha_0(\lambda_i)~~~~~~~(\textrm {as}~ E_N~ \textrm{and}~ \alpha_0~\textrm {commute})\\
  & = \sum_ie_1 \alpha_0({\lambda_i})^*e_1\alpha_0(\lambda_i)\\
  & = e_1.
\end{align*}
In the last equation we have  used the fact that $\{\alpha_0(\lambda_i)\}$ is a basis for $M/N$.\par
Next we will show that $\alpha_1$ agrees with $\alpha_0$ when it is restricted to $M$.
Now, since $\alpha_0$ is a automorphism for $x\in M$ we find that,
\begin{align*}
 \alpha_1(x) & = {\tau}^{-1}\sum_i\alpha_0\{E_M(x{\lambda_i}^*e_1\}e_1\alpha_0(\lambda_i)\\
  & = \sum_i\alpha_0(x{\lambda_i}^*)E_M(e_1)e_1\alpha_0(\lambda_i)\\
  & = \sum_i\alpha_0(x){\alpha_0(\lambda_i)}^*e_1\alpha_0(\lambda_i)~~~~~~(\textrm {since}~ E_M(e_1)=\tau)\\
  & = \alpha_0(x)~~~~~~~~~~~~~~~(\textrm {as}~\{\alpha_0(\lambda_i)\}~\textrm {is a basis for}~M/N).\\
\end{align*}
Now we want to show that $\alpha_1$ is onto.\par Let $y\in M_1$. Then, $y=\sum_i y_ie_1\alpha_0(\lambda_i)$, since ${\alpha_0(\lambda_i)}$ is a basis for $M/N$.
As, $\alpha_0$ is an automorphism there is a unique $x_i\in M$ such that $\alpha_0(x_i)= y_i$. Put $x= \sum_ix_ie_1 \lambda_i$. Then $x$ belongs to $ M_1$.
Now as we have already proved that $\alpha_1$ is a homomrphism which preserves $e_1$ and agree with  $\alpha_0$ when restricted to $M$ it follows trivially that 
$\alpha_1(x)=y$. Thus $\alpha_1$ is onto.\par Lastly we show $\alpha_1 $ is one-one. Observe, $\alpha_1$ is *-preserving, since
if $x=\sum_ix_ie_1\lambda_i$ we find, exactly as above, that
\begin{equation*}
\alpha_1(x^*)=\sum_i{\alpha_1(\lambda_i)}^*e_1{\alpha_1(x_i)}^*=\{\sum_i\alpha_1(x_i)e_1\alpha_1(\lambda_i)\}^*={\alpha_1(x)}^*.
\end{equation*}
Now,
\begin{align*}
 tr(\alpha_1(x)) & = tr(\sum_i\alpha_0(x_i)e_1\alpha_0(\lambda_i))\\
  & = \sum_i tr \{e_1\alpha_0(\lambda_i) \alpha_0(x_i)\}\\
  & = \tau\sum_i tr\{\alpha_0(\lambda_ix_i)\} ~~~~(\textrm{Markov property})\\
  &= \tau\sum_itr(\lambda_ix_i)~~~~~~~~~~~(\textrm{by~Lemma}~\ref{kod})\\
  & = \sum_itr(x_ie_1\lambda_i)~~~~~~~~~~~(\textrm{Markov~property})\\
  &= tr(x).
 \end{align*}
so $\alpha_1$ is $tr$-preserving and hence one-one. The uniqueness assertion is obvious since $M$ and $e_1$ generate $M_1$. Thus $\alpha_1$ 
satisfies all the properties mentioned in the Theorem.
\end{proof}
\begin{corollary}
 Let $\alpha_0$ be as in the previous theorem. Then there is a unique (trace preserving) automorphism $\alpha$ of the hyperfinite $II_1 $ factor $R$
 such that $\alpha(e_i)=e_i,\alpha(M_i)=M_i$  for all $i\geq 1$ and ${\alpha}|M=\alpha_0 $.
\end{corollary}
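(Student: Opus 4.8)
The plan is to iterate Theorem \ref{keshab} along the Jones tower. Applying that theorem to the inclusion $N\subseteq M$ and the automorphism $\alpha_0$ yields a unique trace-preserving automorphism $\alpha_1$ of $M_1$ with $\alpha_1(e_1)=e_1$, $\alpha_1(M)=M$ and $\alpha_1|_M=\alpha_0$. Now the pair $M\subseteq M_1$ is again of the type allowed in Case (1) or Case (2): in Case (1) it is an inclusion of $II_1$ factors with $[M_1:M]=[M:N]<\infty$, and in Case (2) it is a connected inclusion of finite dimensional $C^*$-algebras with inclusion matrix $G^t$ (both facts were already noted in the proof of Corollary \ref{bla2}), while $M_2=\langle M_1,e_2\rangle$ is by construction the basic construction for $M\subseteq M_1$. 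Since moreover $\alpha_1$ is an automorphism of $M_1$ leaving $M$ globally invariant, Theorem \ref{keshab} applies verbatim to the data $(M\subseteq M_1,\alpha_1)$ and produces a unique trace-preserving automorphism $\alpha_2$ of $M_2$ with $\alpha_2(e_2)=e_2$, $\alpha_2(M_1)=M_1$, $\alpha_2|_{M_1}=\alpha_1$. Repeating this step, one obtains by induction a sequence of trace-preserving automorphisms $\alpha_n$ of $M_n$ ($n\geq 1$, with $\alpha_0$ as given, $M_0=M$) satisfying $\alpha_n(e_n)=e_n$, $\alpha_n(M_{n-1})=M_{n-1}$ and $\alpha_n|_{M_{n-1}}=\alpha_{n-1}$.

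The coherence relation $\alpha_n|_{M_{n-1}}=\alpha_{n-1}$ is exactly what is needed to glue the $\alpha_n$ into a single $*$-automorphism $\alpha_\infty$ of the dense $*$-subalgebra $\mathcal M_0=\bigcup_{n\geq 0}M_n$ of $R$. Since the trace on $R$ restricts to $tr_{M_n}$ on each $M_n$ and every $\alpha_n$ is trace-preserving, $\alpha_\infty$ preserves the trace; hence it is $\|\cdot\|_2$-isometric and bijective on $\mathcal M_0$, so it induces a unitary $U$ on $L^2(R,tr)$ with $U\mathcal M_0 U^{*}=\mathcal M_0$ (acting on $L^2(R)$). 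Conjugation by $U$ therefore restricts to a $*$-automorphism $\alpha$ of $R=\mathcal M_0''$ which agrees with $\alpha_\infty$ on $\mathcal M_0$ and is automatically trace-preserving. For the listed properties: an easy induction on $n$ using $\alpha_n|_{M_{n-1}}=\alpha_{n-1}$ and $e_i,M_i\subseteq M_i\subseteq M_{n-1}$ for $i<n$ gives $\alpha_n(e_i)=e_i$ and $\alpha_n(M_i)=M_i$ for all $1\leq i\leq n$; passing to the limit yields $\alpha(e_i)=e_i$ and $\alpha(M_i)=M_i$ for all $i\geq 1$, and $\alpha|_M=\alpha_0$ by construction. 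Uniqueness is immediate: any automorphism of $R$ with these properties restricts on each $M_n$ to an automorphism fixing $e_n$, leaving $M_{n-1}$ invariant and extending the corresponding map on $M_{n-1}$, hence coincides with $\alpha_n$ by the uniqueness clause of Theorem \ref{keshab} (and with $\alpha_0$ on $M$); so it agrees with $\alpha$ on the $\|\cdot\|_2$-dense set $\mathcal M_0$ and therefore on all of $R$.

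The only genuinely delicate points, I expect, are not in the inductive construction, which is a mechanical reapplication of Theorem \ref{keshab}, but rather in the passage from $\mathcal M_0$ to $R$: one must check that the $\|\cdot\|_2$-extension of $\alpha_\infty$ really is a normal $*$-automorphism (in particular surjective, which here follows because its range is a von Neumann subalgebra containing the strongly dense set $\mathcal M_0$). One should also keep the bookkeeping on Markov traces straight across the levels, so that ``trace-preserving'' is consistent at every stage; this is guaranteed by the standard properties of the tower recalled in the Introduction, namely that $tr_{M_n}$ is the (unique) Markov trace for $M_{n-1}\subseteq M_n$ and is the restriction of $tr_{M_{n+1}}$.
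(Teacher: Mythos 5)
Your proposal is correct and follows essentially the same route as the paper: apply Theorem \ref{keshab} recursively along the tower, glue the resulting coherent family $\{\alpha_n\}$ into an automorphism of $\bigcup_n M_n$, extend to $R$ using trace-preservation (hence $\|\cdot\|_2$-isometry), and get uniqueness from density. You simply supply more detail than the paper does at the extension and uniqueness steps, and correctly flag the two points the paper glosses over (that $M_{n-1}\subseteq M_n$ is again an inclusion of the allowed type, and that the $L^2$-extension is a normal surjective $*$-automorphism).
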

\begin{proof}
 Apply Theorem \ref{keshab} recursively for the tower of basic construction to get a unique (trace preserving)
 automorphism $\alpha_i$ on $M_i$ which leaves $M_j$ invariant and fixes
 all $e_j$ such that $1\leq j\leq i$ and ${\alpha_i}|M_j=\alpha_j$. Thus we can  
 define an automorphism(compatible with respect to the tower) $\alpha_{\infty}$ on ${\cup}_iM_i$ by,
 $\alpha_{\infty}(x)=\alpha_j(x) $ for $x\in M_j$. Now as $\alpha_{\infty}$ is bounded it extends to
 trace preserving  automorphism $\alpha$ (say) on $R.$ Also since
  $M_1$ and $e_i$ s  generate $R$ uniqueness is straightforward.
\end{proof}

\subsection{Iterating basic construction}
The following gives a characterization of basic construction  using bases, in both  Case(1) and Case(2). 
This would be needed for our proof of the assertion regarding $k$-th step basic constructions.
\begin{lemma}
\label{fvrt}
Let $N\subseteq M$ be as in Case(1) or Case(2).
Assume $\{\lambda_i:i\in\{1,2,..n\}\}$
is a basis for $M/N$(which exists in both the Cases). Let $P$ be a $II_1$ factor in Case(1) or 
a finite dimensional $C^*$-algebra in Case (2) such that $P$ contains $M$  and also  contains a projection $f$ such that
$\sum_{i=1}^n{{\lambda_i}^*f{\lambda_i}}=1$ and satisfies
further the following two properties :\par$1)fxf=E_N(x)f$  for all $x\in
M$ and\par2)\{${\tau}^{-1/2}f{\lambda_i}$\} is a basis for $P/M $.
 \\In addition for Case(2) $P$ satisfies the following property also: \par3) $n\longmapsto nf$ is an injective map from $N$ into $P$.\\Then there exists an isomorphism from $M_1=\langle M,e_1 \rangle$ onto $P$ which maps
$e_1$ to f.
\end{lemma}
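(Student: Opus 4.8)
The plan is to construct the isomorphism explicitly using the canonical decomposition provided by Corollary \ref{bla2} and Remark \ref{well-def}, exactly in the spirit of the construction of $\alpha_1$ in Theorem \ref{keshab}. First I would recall that by Corollary \ref{bla2} the set $\{\tau^{-1/2}e_1\lambda_i\}$ is a basis for $M_1/M$, so every $x\in M_1$ has the canonical form $x=\tau^{-1}\sum_i E_M(x\lambda_i^* e_1)\,e_1\lambda_i$. I then define $\phi:M_1\to P$ by
\begin{equation*}
\phi(x)=\tau^{-1}\sum_{i=1}^n E_M(x\lambda_i^* e_1)\,f\,\lambda_i .
\end{equation*}
Since the coefficients $E_M(x\lambda_i^* e_1)\in M$ are uniquely determined by $x$ (Remark \ref{r} applied to the basis $\{\tau^{-1/2}e_1\lambda_i\}$ of $M_1/M$), there is no ambiguity in this definition; this is the point that Remark \ref{well-def} is flagging, so it costs nothing here.

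Next I would verify, in order, that $\phi$ has the required algebraic properties. That $\phi$ is linear and $*$-preserving is immediate from the formula (the $*$-preservation computation is the same one used for $\alpha_1$ in Theorem \ref{keshab}, now using that $\{\tau^{-1/2}f\lambda_i\}$ is a basis for $P/M$ by hypothesis 2). To see $\phi$ is multiplicative, one expands $\phi(x)\phi(y)$ and $\phi(xy)$ for $x,y\in M_1$: in both expansions one uses property 1) ($f z f = E_N(z)f$ for $z\in M$) to collapse the middle $f\cdots f$, uses that $E_N$ is an $M$-bimodule... more precisely an $N$-bimodule map together with the module properties of $E_M$, and finally uses the basis relation $\sum_i\lambda_i^* e_1\lambda_i=1$ in $M_1$ (respectively $\sum_i\lambda_i^* f\lambda_i=1$ in $P$) to contract a double sum to a single sum. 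This is the same bookkeeping as in the proof that $\alpha_1$ is a homomorphism, with $f$ in the role of $e_1$ on the target side. One also checks $\phi(1)=\tau^{-1}\sum_i E_M(e_1\lambda_i^* e_1)f\lambda_i=\sum_i E_N(\lambda_i^*)f\lambda_i=\sum_i\lambda_i^* f\lambda_i\cdot$\,(after the same Jones-projection manipulation used for $\alpha_1(e_1)$)$=1$, so $\phi$ is unital, and that $\phi(e_1)=f$ and $\phi|_M=\mathrm{id}_M$ by the identical computations that showed $\alpha_1(e_1)=e_1$ and $\alpha_1|_M=\alpha_0$.

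Finally I would establish bijectivity. Surjectivity is easy: given $y\in P$, hypothesis 2) lets us write $y=\sum_i y_i f\lambda_i$ with $y_i\in M$, and then $x:=\sum_i y_i e_1\lambda_i\in M_1$ satisfies $\phi(x)=y$ since $\phi$ is a homomorphism fixing $M$ pointwise and sending $e_1$ to $f$. For injectivity, the clean argument is the trace one: in Case (1), $\phi$ is a unital $*$-homomorphism between $II_1$ factors and hence automatically injective; in Case (2) this is no longer automatic, and this is where hypothesis 3) — that $n\mapsto nf$ is injective on $N$ — is needed. I expect this Case (2) injectivity to be the main obstacle. The argument I would give: $M_1$ in Case (2) is a finite-dimensional $C^*$-algebra with a known structure (its center, minimal central projections, and inclusion data for $M\subseteq M_1$ being $G^t$), and a $*$-homomorphism out of it is injective iff it is injective on each simple summand iff it does not kill any minimal central projection. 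One shows $\ker\phi$ is a two-sided ideal, hence of the form $z M_1$ for a central projection $z$; if $z\neq 0$ one can, using the canonical decomposition together with property 1) and the basis relation, trace the kernel down to an element of $N$ (roughly, $E_M$ and the $e_1$'s propagate the relation $\phi(x)=0$ to $\phi$ being zero on a corresponding element $nf$ with $n\in N$), and hypothesis 3) forces that element to be $0$; iterating / using the generation of $M_1$ by $M$ and $e_1$ then forces $z=0$. Alternatively, and more in keeping with the paper's style, one checks that $\phi$ is trace-preserving for the Markov traces (the computation mirrors the $tr(\alpha_1(x))=tr(x)$ computation in Theorem \ref{keshab}, using the Markov property and $E_M(e_1)=\tau$), after first using hypothesis 3) to identify the restriction of the trace of $P$ on the subalgebra $\phi(M_1)\cong$ the image, so that a trace-preserving map is automatically injective; faithfulness of the Markov trace then gives $\ker\phi=0$. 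Either way, $\phi$ is the desired isomorphism $M_1\to P$ with $\phi(e_1)=f$.
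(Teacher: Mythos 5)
Your Case (1) argument is essentially the paper's own: the same map $\phi(x)=\tau^{-1}\sum_i E_M(x\lambda_i^*e_1)\,f\lambda_i$, well-defined by the canonical decomposition of Corollary \ref{bla2}, with the same verifications of multiplicativity (via property 1) and the relations $\sum_i\lambda_i^*e_1\lambda_i=1$, $\sum_i\lambda_i^*f\lambda_i=1$), of $\phi(e_1)=f$ and $\phi|_M=\mathrm{id}$, surjectivity from hypothesis 2), and injectivity for free from simplicity of $M_1$. Where you genuinely diverge is Case (2): the paper does \emph{not} push the explicit construction through there. It instead notes that hypothesis 2) gives $P=MfM$, hence (with property 1)) $P=PfP$ and so $Z_P(f)=1$, and then simply invokes the abstract characterization of the finite-dimensional basic construction in \cite{JS} (Corollary 5.3.2), whose hypotheses are precisely 1), 3) and $Z_P(f)=1$. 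Your route, proving injectivity of $\phi$ directly, is viable via your first (ideal-theoretic) suggestion, but you should make the key compression explicit: $\ker\phi=zM_1$ for a central projection $z$; since $z$ commutes with $e_1$ and $e_1M_1e_1=Ne_1$, one gets $ze_1=e_1ze_1=n_ze_1$ for some $n_z\in N$; then $0=\phi(ze_1)=n_zf$ forces $n_z=0$ by hypothesis 3), so $ze_1=0$, and $M_1=Me_1M$ yields $zM_1=Mze_1M=0$, i.e.\ $z=0$. By contrast, your alternative ``trace-preserving'' argument has a real gap in Case (2): $P$ carries no given trace, and defining one as a push-forward along $\phi$ presupposes control of $\ker\phi$, which is circular; I would drop that variant. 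In sum, the paper's Case (2) treatment is shorter but outsources the content to \cite{JS}, while your version keeps the two cases uniform and self-contained at the cost of the extra kernel analysis.
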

In this situation  we say that $P$ is an instance of basic construction applied to the inclusion $N\subseteq M$ with 
a choice of projection implementing the conditional expectation being given by $f$.
\begin{proof}
$\mathcal Case 1:$
 Let $x \in M_1$. Now from Corollary \ref{bla2}  it follows that\\
 \begin{equation*}
 x = \sum_i{\tau}^{-1/2}E_M(x{\tau}^{-1/2}{\lambda_i}^*e_1)e_1{\lambda_i}.
 \end{equation*}
 Put $a_i={\tau}^{-1}E_M(x{\lambda_i}^*e_1)$, 
 then define a map $\phi :M_1 \mapsto P$ by $\phi(x) = \sum_ia_if{\lambda_i}$, which is clearly well-defined. 
 Note, if $y= \sum_ib_ie_1\lambda_i$ such that $[b_1,b_2,......b_n] \in M_{1\times n }(M)Q$,  
 then by Remark \ref{r}  we conclude 
 \begin{equation}
  \phi(y)=\sum_ib_if\lambda_i.
 \end{equation}
 Since, if $Q_1$ is the matrix whose $i-j$ th entry is given by $q_1(i,j)=E_M(({\tau}^{-1/2}e_1\lambda_i)({{\tau}^{-1/2}e_1\lambda_j})^*)$,
then $q_1({i,j})=E_N(\lambda_i{\lambda_j}^*)=q_{ij}$.
Now, let $x$ be as above and let $y \in M_1$. Put $y=\sum_ib_ie_1\lambda_i$ where \\
$b_i= {\tau}^{-1}E_M(y{\lambda_i}^*e_1)$. Then the following equations follow from properties of Jones' projection,
\begin{align*}
 \phi(xy) & = \phi\{\sum_{i,j} a_iE_N(\lambda_ib_j)e_1\lambda_j\}\\
    &= \phi\{\sum_{i,j}{\tau}^{-1}E_M(x{\lambda_i}^*e_1)E_N(\lambda_ib_j)e_1\lambda_j\}\\
    &= \phi\{\sum_{i,j}{\tau}^{-1}E_M(x{\lambda_i}^*e_1E_N(\lambda_ib_j))e_1\lambda_j\}\\
    &= \phi\{\sum_j{\tau}^{-1}E_M(xb_je_1)e_1 \lambda_j\}~~~~(\textrm {since}~\sum_i{\lambda_i}^*e_1\lambda_i=1)\\
    &= \phi\{\sum_j{\tau}^{_1}E_M[x{\tau}^{-1}E_M(y{\lambda_j}^*e_1)e_1]e_1\lambda_j\}.
\end{align*}
Now it can be easily checked that,\\
 $[{\tau}^{-1}E_M\{x{\tau}^{-1}E_M(y{\lambda_1}^*e_1)e_1\},{\tau}^{-1}E_M\{x{\tau}^{-1}E_M(y{\lambda_2}^*e_1)e_1\},.....,\\{\tau}^{-1}E_M\{x{\tau}^{-1}E_M(y{\lambda_n}^*e_1)e_1\}] \in M_{1\times n}(M)Q.$

Thus, it follows from equation(3.1) that,
\begin{align*}
\phi(xy) & =\sum_j{\tau}^{-1}E_M[x{{\tau}^{-1}E_M(y{\lambda_j}^*e_1)}e_1]f{\lambda_j}\\
  & = \sum_{i,j}{\tau}^{-1}E_M(a_ie_1{\lambda_i}b_je_1)f{\lambda_j}\\
      &= \sum_{i,j}{\tau}^{-1}a_iE_M(E_N(\lambda_i b_j)e_1)f{\lambda_j}\\
      &=\sum_{i,j}a_iE_N(\lambda_ib_j)f\lambda_j\\
      & = \sum_{i,j} a_if\lambda_i b_j f \lambda_j~~~~~~~~~~(\textrm {by~assumption}~(1))\\
      &=\phi(x)\phi(y).
\end{align*}

Also, we have,
\begin{align*}
 \phi(e_1) & = {\tau}^{-1}\sum_iE_M(e_1{\lambda_i}^*e_1)f{\lambda_i}\\
     & = \sum_iE_N({\lambda_i}^*)f \lambda_i~~~~~~~~~(\textrm {since}~E_M(e_1)=\tau)\\
     & = \sum_i f{\lambda_i}^*f\lambda_i~~~~~~~~~~~~~~(\textrm {by assunption}~(1))\\
     & = f~~~~~~~~~~~~~~(\textrm {since}, ~~\sum_i{\lambda_i}^* f {\lambda_i}= 1).
\end{align*}
Thus $\phi$ is a nonzero homomorphism.
Now assume, $x\in M$, then,
\begin{align*}
 \phi(x) & = \sum_i{\tau}^{-1}E_M(x{\lambda_i}^*e_1)f {\lambda_i}\\
    & = \sum_i x{\lambda_i}^*f{\lambda_i}~~~~~~~~~(\textrm {since}~x{\lambda_i}^*\in M)\\
    & = x.
\end{align*}
$\phi$ is also *-preserving, as, if $x= \sum_ia_ie_1\lambda_i$ is  any element of $M_1$, then the  following identities hold:
\begin{align*}
 \phi(\{\sum_ia_ie_1\lambda_i\}^*) & = \sum_i{\lambda_i}^*f{a_i}^*~~~~(\textrm{since}~\phi(e_1)= f~\textrm {and}~ {\phi}|_M = id)\\
     & = \{\sum_ia_i f\lambda_i\}^*\\
     & = \{\phi(\sum_ia_i e_1 \lambda_i)\}^*.
\end{align*}

 Thus $\phi({x}^*)= {\phi(x)}^*$.\\Since we are now in a factor $\phi$ is automatically injective.\\Finally we show $\phi$ is onto. 
 For this purpose assume $z\in P$,  assumption(2) then implies 
 $z=\sum_ic_if\lambda_i$ for some $c_i\in M$. Put, $y = \sum_ic_i e_1 \lambda_i $ which belongs to  $M_1$ and since $\phi$
 is a homomorphism sending $e_1$ to $f$ and whose restriction to $M$ is identity, 
 we clearly get $\phi(y) = z$, proving onto.
 Thus $\phi$ is an isomorphism satisfying all the conditions stated in the Lemma.\\ \\ 
$\mathcal Case2:$ Note assumption(2) implies $P= MfM$. Also this together with assumption(1) imply that $PfP=MfM$. Thus $P=PfP$
 which forces $Z_P(f)=1$. Now just applying Corollary 5.3.2 in \cite{JS} we get the result.\par This completes the Lemma.
 
\end{proof}
Now we give another proof of $k$-th step basic construction for an inclusion of $II_1$ factors using basis and also we show it can be done for Case(2).
\begin{theorem}
 Let $N\subseteq M$ be a pair of von Neumann algebras as in Case(1) or (2) and $N\subseteq M\subseteq M_1\subseteq.....$ be 
 the tower of $II_1$ factors (or finite dimensional $C^*$-algebras)in Case(1) 
 (or in Case(2) respectively) which can be obtained by iterating basic construction. Let $e_i\in M_i$ be the Jones' projections.
 Then  for $m\geq 0,k\geq -1$, 
 $M_k\subseteq M_{k+m}\subseteq M_{k+2m} $ is an instance of basic construction
 with a choice of projection implementing  the conditional expectation of $M_{k+m}$ onto $M_k$ is given by\\
\begin{align*}
& e_{[k,k+m]}\\ 
&\qquad={\tau}^{-m(m-1)/2}(e_{k+m+1}e_{k+m}...e_{k+2})(e_{k+m+2} e_{k+m+1}...e_{k+3})\\
&\qquad\qquad...(e_{k+2m}e_{k+2m-1}....e_{k+m+1}).
\end{align*} 
\end{theorem}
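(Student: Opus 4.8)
The plan is to prove the statement by induction on $m$, with the case $m=0$ being trivial (there $e_{[k,k]}$ is the empty product, interpreted as $1$, and $M_k \subseteq M_k \subseteq M_k$ is vacuously a basic construction). The engine of the argument is Lemma \ref{fvrt}: to show that $M_k \subseteq M_{k+m} \subseteq M_{k+2m}$ is an instance of basic construction with projection $e_{[k,k+m]}$, I must exhibit a basis $\{\Lambda_\alpha\}$ for $M_{k+m}/M_k$ and then verify the three bulleted hypotheses of Lemma \ref{fvrt} for the triple $(M_k \subseteq M_{k+m},\ P = M_{k+2m},\ f = e_{[k,k+m]})$: namely that $\sum_\alpha \Lambda_\alpha^* e_{[k,k+m]} \Lambda_\alpha = 1$, that $e_{[k,k+m]}\, x\, e_{[k,k+m]} = E_{M_k}(x)\, e_{[k,k+m]}$ for all $x \in M_{k+m}$, that $\{\tau^{-1/2} e_{[k,k+m]} \Lambda_\alpha\}$ is a basis for $M_{k+2m}/M_{k+m}$, and (in Case (2)) that $n \mapsto n\, e_{[k,k+m]}$ is injective on $M_k$. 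For the basis of $M_{k+m}/M_k$ I would use the explicit iterated basis of Corollary \ref{bla}: starting from a basis $\{\lambda_i\}$ of $M/N$ and relabelling along the tower, the elements $\Lambda_{\widehat{i(m)}} = \tau^{-m(m-1)/4}\,\lambda^{(k)}_{i_1} e_{k+1}\lambda^{(k)}_{i_2} e_{k+2}e_{k+1}\lambda^{(k)}_{i_3}\cdots$ (with superscripts denoting the shift of the original basis into $M_{k+\bullet}$) form a basis for $M_{k+m}/M_k$, by the shift-invariance of the whole tower structure.

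The heart of the matter is the identity $e_{[k,k+m]}\, x\, e_{[k,k+m]} = E_{M_k}(x)\, e_{[k,k+m]}$ together with $\sum_\alpha \Lambda_\alpha^* e_{[k,k+m]} \Lambda_\alpha = 1$, and here I would not attack $m$ directly but instead set up a \emph{second} induction exploiting the factorization
\begin{equation*}
e_{[k,k+m]} = \tau^{-(m-1)}\, e_{[k+1,k+m]}\, \bigl(e_{k+m+1} e_{k+m}\cdots e_{k+2}\bigr),
\end{equation*}
i.e. peeling off the last ``diagonal'' block. This realizes $e_{[k,k+m]}$ as (a scalar multiple of) a product of the projection $e_{[k+1,k+m]}$ implementing the basic construction $M_{k+1}\subseteq M_{k+m} \subseteq M_{k+2m-1}$ (available by the inductive hypothesis, after shifting $k \to k+1$) with a descending string $e_{k+m+1}\cdots e_{k+2}$ which itself implements the ``one-step-wide, $m$-steps-long'' basic construction $M_{k+1} \subseteq M_{k+m+1}$ — the $m=1$-type object studied via Corollary \ref{bla2} applied $m$ times. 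In other words I would realize the $(k,m)$ basic construction as a composite of the $(k{+}1,m{-}1)$ one and an iterated one-step one, and then invoke Corollary \ref{bla1} (a basis for $P/M$ times a basis for $M/N$ gives a basis for $P/N$) to splice the bases, and the already-established compatibility of the two composing basic constructions to get property (1) and the pull-through $e_{[k,k+m]} x e_{[k,k+m]} = E_{M_k}(x) e_{[k,k+m]}$. The Temperley--Lieb/Jones relations $e_i e_{i\pm1} e_i = \tau e_i$ and $[e_i,e_j]=0$ for $|i-j|\ge 2$ are what make all the bookkeeping of the normalization constants $\tau^{-m(m-1)/2}$ come out right, and I would verify the constant by a trace computation: $tr(e_{[k,k+m]})$ should equal $\tau^{m}$, matching the index $[M_{k+m}:M_k] = \tau^{-m}$ that a genuine $m$-fold basic construction must have.

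For property (2), that $\{\tau^{-1/2} e_{[k,k+m]} \Lambda_\alpha\}$ is a basis for $M_{k+2m}/M_{k+m}$: once (1) is known (so that $M_{k+2m} \supseteq M_{k+m}$ with $e_{[k,k+m]}$ a legitimate Jones projection in the sense of satisfying the pull-through), this follows immediately from Corollary \ref{bla2} with the roles $N \rightsquigarrow M_k$, $M \rightsquigarrow M_{k+m}$, $M_1 \rightsquigarrow M_{k+2m}$, $e_1 \rightsquigarrow e_{[k,k+m]}$ — indeed that corollary is stated exactly so as to be reusable in this shifted generality. For property (3) in Case (2), injectivity of $n \mapsto n\, e_{[k,k+m]}$ on $M_k$, I would argue as in the Case (2) part of Lemma \ref{fvrt}: assumption (2) gives $M_{k+2m} = M_{k+m}\, e_{[k,k+m]}\, M_{k+m}$, which combined with the pull-through forces the central support of $e_{[k,k+m]}$ in $M_{k+2m}$ to be $1$, and then faithfulness of the trace kills the kernel; alternatively one can observe that $n\,e_{[k,k+m]} = 0$ with $n \in M_k$ implies (applying $E_{M_k}$ after multiplying on the right by suitable elements and using the pull-through) $n\, E_{M_k}(1) = n = 0$.

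I expect the main obstacle to be purely combinatorial rather than conceptual: organizing the double induction cleanly, and in particular verifying the factorization identity for $e_{[k,k+m]}$ and tracking the powers of $\tau$ through the Temperley--Lieb relations, so that the block-by-block peeling genuinely produces the composite of two lower basic constructions to which Corollaries \ref{bla1} and \ref{bla2} apply verbatim. Once that factorization is pinned down, every remaining step is an application of a result already in hand (Lemma \ref{fvrt}, Corollary \ref{bla1}, Corollary \ref{bla2}, Corollary \ref{bla}) together with the standard relations among the $e_i$.
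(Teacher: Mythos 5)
Your overall architecture coincides with the paper's: reduce everything to checking the hypotheses of Lemma \ref{fvrt} for $f=e_{[k,k+m]}$, take the iterated basis of Corollary \ref{bla} for $M_{k+m}/M_k$, build a basis of $M_{k+2m}/M_{k+m}$ from Corollaries \ref{bla1} and \ref{bla2}, and settle the Case (2) injectivity by a trace argument. The gap is in the engine you propose for the two essential identities (the pull-down $e_{[k,k+m]}xe_{[k,k+m]}=E_{M_k}(x)e_{[k,k+m]}$ and $\sum_\alpha\Lambda_\alpha^*e_{[k,k+m]}\Lambda_\alpha=1$): the factorization
\begin{equation*}
e_{[k,k+m]}=\tau^{-(m-1)}\,e_{[k+1,k+m]}\,(e_{k+m+1}e_{k+m}\cdots e_{k+2})
\end{equation*}
is false. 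Already for $m=2$ it asserts $\tau^{-1}(e_{k+3}e_{k+2})(e_{k+4}e_{k+3})=\tau^{-1}e_{k+3}(e_{k+3}e_{k+2})=\tau^{-1}e_{k+3}e_{k+2}$; the left side is a projection of trace $\tau^{2}$, while the right side is not even self-adjoint and has trace $\tau$. The $m$-step projection is simply not a product of the $(m-1)$-step one with a single descending string --- the multistep basic construction does not decompose as a ``composite'' of $M_k\subseteq M_{k+1}$ and $M_{k+1}\subseteq M_{k+m}$ in this naive sense --- so the peeling scheme, and with it the claimed transfer of the pull-down property from the $(k+1,m-1)$ case, collapses.

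The recursion that does work, and that the paper uses, is the two-sided sandwich with $k$ held fixed (stated there for $k=-1$, to which the general case is reduced without loss of generality):
\begin{equation*}
e_{[k,k+m]}=\tau^{-(m-1)}(e_{k+m+1}e_{k+m+2}\cdots e_{k+2m})\,e_{[k,k+m-1]}\,(e_{k+2m-1}e_{k+2m-2}\cdots e_{k+m+1}),
\end{equation*}
combined with the contraction identity of type $(e_{2n+2}\cdots e_{n+3})(e_{n+2}\cdots e_{2n+3})=\tau^{n}e_{2n+2}e_{2n+3}$; a single induction on $m$ then yields both the partition of unity and the pull-down. Two further soft spots: your appeal to Corollary \ref{bla2} ``in shifted generality'' for hypothesis (2) of Lemma \ref{fvrt} is circular as stated, since that corollary concerns the canonical Jones projection of the actual next floor of the tower, which is precisely what $e_{[k,k+m]}$ has not yet been identified with --- the paper instead computes a basis of $M_{k+2m}/M_{k+m}$ one floor at a time and rearranges it by Temperley--Lieb moves into the form $\tau^{-m/2}e_{[k,k+m]}\Lambda_\alpha$. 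And in Case (2) the normalization check for that basis is not a formal consequence of a trace computation; it requires the norm identity $\|G(G^{t}G)^{m}\|=\|G\|^{2m+1}$ for the relevant inclusion matrices, which the paper proves separately for the even and odd cases.
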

\begin{proof}
Without loss of generality we shall prove that $M_{-1}\subseteq M_n\subseteq M_{2n+1}$ 
is an instance of basic construction with $e_{[-1, n]}$ is the required projection. Assume $\{\lambda_i:i\in{1,2,..n}\}$
is a basis for $M/N$(which exists in both the Cases).
Now from Corollary $\ref{bla}$ we know that $\{\lambda_{\widehat{i(n+1)}}\}$ is a basis 
 for $M_n/N$.

Now applying Corollary \ref{bla1} and Corollary \ref{bla2} repeateadly we get $M_{2n+1}/M_n $ has basis,
$$
{\tau}^{{-1/2}\{(n+1)+(n+2)+..+(2n+1)\}}(e_{n+1}..e_1)\lambda_{i_1}
(e_{n+2}..e_1)\times
$$ 
$$
\lambda_{i_2}.....(e_{2n}..e_1)\lambda_{i_n}(e_{2n+1}..e_1)\lambda_{i_{n+1}}.
$$
Observe that,
$$
 (e_{n+1}...e_1)\lambda_{i_1}(e_{n+2}..e_1)\\
 \lambda_{i_2} ..(e_{2n+1}...e_1)\lambda_{i_{n+1}}
$$
 $$~~~~~~~~~~~~~~~~~~~~~~~~~~~~~=(e_{n+1}..e_1)(e_{n+2}..e_2)(e_{n+3}..e_3)..(e_{2n+1}..e_{n+1})$$
  $$~~~~~~~~~~~~~~~~~\lambda_{i_1}e_1\lambda_{i_2}e_2e_1\lambda_{i_3}..\lambda_{i_n}e_n..e_1\lambda_{i_{n+1}}.
$$
In other words it shows that, $M_{2n+1}/M_n$ has basis as, \\
$ 
\{{\tau}^{-(n+1)/2}e_{[-1,n]}\lambda_{\widehat{i(n+1)}}\}$.\par Note, $[M_n:N]={[M:N]}^{(n+1)}= {\tau}^{-(n+1)}$.
Thus condition (2) of the Lemma \ref{fvrt} holds for factor Case. \par To do the same  for finite 
dimensional $C^*$-algebra we break this into two Cases.\\ 
$\mathcal Case 1:$ Suppose $n$ is odd. Then the inclusion matrix for $N\subseteq M$ would be ${(G{G}^t)}^k$ where $n=(2k-1)$. But it is easy to see that 
$\|{(G{G}^t)}^k\|={\|G\|}^{2k}={\|G\|}^{(n+1)}={\tau}^{-(n+1)/2}$. 
Thus condition (2) of the Lemma \ref{fvrt} holds in this Case.\\ $\mathcal Case 2:$ Here $n$ is even $n=2m$ (say). Then the  inclusion matrix 
for $N\subseteq M$ would be $G{({G}^t G)}^m$. Then we see,
$\|{G}^tG{({G}^tG)}^m\| \leq \|{G}^t\|\|G{({G}^tG)}^m\|= \|G\| \|G{({G}^t G)}^m\|$.
Now applying the Case(1) in left hand side, we get that ${\|G\|}^{2m+1}\leq \|G{({G}^tG)}^m\|$. The opposite inequality 
is obvious. Thus condition(2) of Lemma \ref{fvrt} holds in this Case also.\\We need to show that , for all $k\geq 1$,(for both Cases),
\begin{equation}
\sum_{i_1,i_2,....,i_{k}}{\lambda}^*_{\widehat{i(k)}}e_{[-1,k-1]}\lambda_{\widehat{i(k)}} =1.
\end{equation}
We prove it by induction over $k\geq 1$. It is easy to see that,
$$
\lambda_{\widehat{ i(n)}}({\tau}^{-n/2}e_n..e_1\lambda_{i_{n+1}})= \lambda_{\widehat{i(n+1)}}.$$ and hence, $$
({\tau}^{-n/2}{\lambda}^*_{i_{n+1}}e_1...e_n){\lambda}^*_{\widehat{i(n)}}= {\lambda}^*_{\widehat{i(n+1)}}.
$$Suppose, as induction hypothesis, for $n\geq 1$,
\begin{equation}
 \sum_{i_1,i_2,...,i_n}{\lambda}^*_{\widehat{i(n)}}e_{[-1, n-1]} {\lambda_{(\widehat{i(n)}}} = 1.
\end{equation}
Since $\sum_i{\lambda_i}^*e_1\lambda_i =1 $, we see that equation (3.2) holds for $k= 1$.
Also we know, for $n\geq 1$,
\begin{multline*}
 e_{[-1,n]}={\tau}^{-n}(e_{n+1} e_{n+2}...e_{2n+1})e_{[-1,n-1]}(e_{2n} e_{2n-1}..e_{n+1}).
\end{multline*}
Thus,
\begin{align*}
 &\sum_{i_1,i_2,..,i_{n+1}}{\lambda}^*_{\widehat{i(n+1)}}e_{[-1,n]}\lambda_{\widehat{i(n+1)}}  \\
 &\qquad = \sum_{i_1,i_2,..i_{n+1}}{\tau}^{-2n}{\lambda}^*_{i_{n+1}}(e_1 e_2...e_n){\lambda}^*_{\widehat{i(n)}}(e_{n+1}e_{n+2}..e_{2n+1})e_{[-1,n-1]}\\
 &\qquad\qquad\qquad\qquad(e_{2n}..e_{n+1})\lambda_{\widehat{i(n)}}(e_n e_{n-1}..e_1)\lambda_{i_{n+1}}\\
 &\qquad =\sum_{i_1,i_2,,..i_{n+1}}{\tau}^{-2n}{\lambda}^*_{i_{n+1}}(e_1e_2..e_n)(e_{n+1}..e_{2n+1}){\lambda}^*_{\widehat{i(n)}}e_{[-1,n-1]}
 \lambda_{\widehat{i(n)}}\\
 &\qquad\qquad\qquad\qquad(e_{2n}e_{2n-1}..e_{n+1})(e_n e_{n-1}..e_1)\lambda_{i_{n+1}}\\
 &\qquad = {\tau}^{-2n}\sum_{i_{n+1}}{\lambda}^*_{i_{n+1}}(e_1e_2....e_{2n+1})(e_{2n}e_{2n-1}...e_1) \lambda_{i_{n+1}}\\
 &\qquad\qquad\qquad\qquad\text{[by equation (3.3)]}\\
 &\qquad = \sum_{i_{n+1}}{\lambda}^*_{i_{n+1}}e_1 {\lambda_{i_{n+1}}}\\
 &\qquad\qquad\qquad\qquad\text{[since $(e_1e_2..e_{2n+1})(e_{2n}e_{2n-1}..e_1)= {\tau}^{2n}e_1$]}\\
 &\qquad=1.
\end{align*}
Here, the  second  equation  holds as $\lambda_{\widehat{i(n)}}\in M_{n-1}$ and $(e_{n+1}e_{n+2}..e_{2n+1}),\\(e_{2n}e_{2n-1}..e_{n+1})$ 
both commutes with $M_{n-1}$.
\par Hence the induction is complete.\\Now  we  show  property(1) of the Lemma \ref{fvrt}.\par As induction hypothesis, suppose,for $n\geq 0,$
$$ e_{[-1,n]}x_n e_{[-1,n]} = E_N(x_n)e_{[-1,n]}~ \text{for}~ x_n\in M_n.$$It trivially holds for $n=0$.
 Then, for $n\geq 0$,and for $x_{n+1} \in M_{n+1}$, we get the following array of equations,
\begin{align*}
&e_{[-1,n+1]}x_{n+1}e_{[-1,n+1]}\\
&\qquad= {\tau}^{-2(n+1)}(e_{n+2}..e_{2n+3})e_{[-1,n]}(e_{2n+2}..e_{n+2})\\
&\qquad\qquad\qquad x_{n+1}(e_{n+2}..e_{2n+3})e_{[-1,n]}(e_{2n+2}..e_{n+2})\\
&\qquad= {\tau}^{-2(n+1)}(e_{n+2}..e_{2n+3})e_{[-1,n]}(e_{2n+2}..e_{n+3})\\
&\qquad\qquad\qquad E_{M_n}(x_{n+1})(e_{n+2}..e_{2n+3})e_{[-1,n]}(e_{2n+2}..e_{n+2})\\
&\qquad={\tau}^{-2(n+1)}(e_{n+2}..e_{2n+3})e_{[-1,n]}(e_{2n+2}..e_{n+3})\\
&\qquad\qquad\qquad(e_{n+2}..e_{2n+3})E_{M_n}(x_{n+1})e_{[-1,n]}(e_{2n+2}..e_{n+2})\\
&\qquad ={\tau}^{-2(n+1)}(e_{n+2}..e_{2n+3})e_{[-1,n]}({\tau}^ne_{2n+2}e_{2n+3})\\
&\qquad\qquad\qquad E_{M_n}(x_{n+1})e_{[-1,n]}(e_{2n+2}..e_{n+2})\\
&\qquad= {\tau}^n{\tau}^{-2(n+1)}(e_{n+2}..e_{2n+2})e_{[-1,n]}(e_{2n+3}e_{2n+2}e_{2n+3})\\
&\qquad\qquad\qquad E_{M_n}(x_{n+1})e_{[-1,n]}(e_{2n+2}..e_{n+2})\\
&\qquad= {\tau}^{-(n+1)}(e_{n+2}..e_{2n+3})e_{[-1,n]}E_{M_n}(x_{n+1})e_{[-1,n]}(e_{2n+2}..e_{n+2})\\
&\qquad= {\tau}^{-(n+1)}(e_{n+2}..e_{2n+3})E_N(x_{n+1})e_{[-1,n]}(e_{2n+2}..e_{n+2})\\
&\qquad\qquad\qquad\qquad\text{[Induction hypothesis]}\\
&\qquad=E_N(x_{n+1})e_{[-1,n+1]}.
\end{align*}
The  fourth equation holds because of the almost trivial fact that 
\begin{equation}
(e_{2n+2}..e_{n+3})(e_{n+2}..e_{2n+3}) = {\tau}^ne_{2n+2}e_{2n+3}.
\end{equation}

It should be mentioned that throughout we have used the fact that, for $n\geq0$,
\begin{equation*}
e_{[-1,n+1]}= {\tau}^{-(n+1)}\\(e_{n+2}..e_{2n+3})e_{[-1,n]}(e_{2n+2}..e_{n+2}).
\end{equation*}
This completes the induction.\\Now using Lemma \ref{fvrt} we get the desired result for $II_1$ factor Case.
\\For finite dimensional ${C}^*$- algebra the only remaining thing is to prove that the map $x\longmapsto xe_{[-1,n]}$
for $x\in N$ is injective. From Lemma \ref{imp} it follows that $xe_1=0$ implies $x=0$ for $x\in N$, proving the above fact for $n=0$. 
 Suppose the statement is true for $ (n-1)$, that is for $x\in N, xe_{[-1,n-1]}=0 $
 implies $x=0$. Let for $x\in N,  xe_{[-1,n]}= 0$. Thus,
${({\|xe_{[-1,n]}\|}_2)}^2 =tr(xe_{[-1,n]}x^*)=0$. Note,
\begin{align*}
  0 & = tr(e_{[-1,n]}x^*x) \\
  &= tr((e_{n+1}e_{n+2}..e_{2n+1})e_{[-1,n-1]}(e_{2n}..e_{n+1})x^*x)\\
  & = tr(e_{[-1,n-1]}(e_{2n}..e_{n+1})(e_{n+1}..e_{2n+1})x^*x)~~~~~~~(\textrm{since}~ x^*x\in N)\\
  & = tr(e_{[-1,n-1]}({\tau}^{n-1}e_{2n}e_{2n+1})x^*x).~~~~~~~~~~~(\textrm{by~equation}(3.4))
\end{align*}
But as we know $tr$ is Markov, we conclude from the last equation $tr(e_{[-1,n-1]}x^*x)=0$, that is $tr(xe_{[-1,n-1]}x^*)=0 $. In other words,$xe_{[-1,n-1]}=0$ and now from induction hypothesis we conclude
$x=0$. Hence the induction is complete.\par This completes the proof for both the Cases.
\end{proof}
\section{Acknowledgement} I wish to thank  V. S. Sunder for many helpful discussions.

\bibliographystyle{plain}

\end{document}